\theoremstyle{plain}
\newtheorem{thm}[subsection]{Theorem}
\newtheorem{prop}[subsection]{Proposition}
\newtheorem{lem}[subsection]{Lemma}
\theoremstyle{definition}
\newtheorem{defn}[subsection]{Definition}
\theoremstyle{remark}
\newtheorem{rmk}[subsection]{Remark}
\newcommand{\nc}{\newcommand}
\nc{\dmo}{\DeclareMathOperator}
\nc{\ud}{\mathrm{d}}
\nc{\uu}{\Upsilon^2}
\nc{\cinf}{\text{CFK}^\infty}
\nc{\alex}{\text{Alex}}
\nc{\alg}{\text{alg}}
\nc{\f}{\mathbb{F}}
\nc{\z}{\mathbb{Z}}
\nc{\hf}{HF^\infty}
\nc{\x}{\otimes}
\begin{document}
\title[On the Secondary Upsilon Invariant]{On the Secondary Upsilon Invariant}
\date{\today}
\author[Xu]{Xiaoyu Xu}
\address[Xiaoyu Xu]{Princeton University\\ Princeton, NJ 08544, USA}
\email{xiaoyuxu137@gmail.com}

\maketitle

\date{\today}

\begin{abstract}

In this paper we construct an infinite family of knots with vanishing Upsilon invariant $\Upsilon$, although their secondary Upsilon invariants $\uu$ show that they are linearly independent in the smooth knot concordance group. We also prove a conjecture in a paper by Allen.

\end{abstract}

\section{Introduction}

The concordance group is an important object in knot theory. Using Tristram-Levine signature functions, Litherland \cite{litherland_1979} proved that torus knots are linearly independent in the topological knot concordance group. In particular, the torus knots are also linearly independent in the smooth concordance group.

More recently discovered techniques are able to distinguish between topological and smooth concordance. In particular, there is a natural homomorphism from smooth concordance group to topological concordance group: $f: \mathcal{C}^{\textit{smooth}}\rightarrow \mathcal{C}^{\textit{top}}$. In \cite{hom_2015}, Hom constructed a $\mathbb{Z}^\infty$ direct summand in the kernel of $f$. In \cite{oss17}, Ozsv\'{a}th, Stipsicz and Szab\'{o} defined a smooth concordance invariant \emph{Upsilon} $\Upsilon_K(t)$, $t\in[0,2]$ for knots $K\subset S^3$, and used it to reprove Hom's result. The $\Upsilon$ invariant can also be used to obtain bounds on the three-genus, four-genus and concordance genus of knots. Later, Feller and Krcatovich \cite{feller_krcatovich_2017} proved a relationship among the $\Upsilon$ of torus knots:
\[\Upsilon_{T(p,q)}(t) = \Upsilon_{T(p, q-p)}(t) + \Upsilon_{T(p, p+1)}(t)\]

The $\Upsilon$ invariant was originally defined using a "$t$-modified knot Floer complex". In \cite{livingston_2017} Livingston gave a reinterpretation of $\Upsilon$ so that $\Upsilon$ comes directly from knot Floer complex $\cinf$. Later, Hom showed in \cite{hom_2017} that smoothly concordant knots have \emph{stably equivalent} $\cinf$, which means that their $\cinf$ are bifiltered chain homotopy equivalent up to an acyclic summand. This directly reproves that smoothly concordant knots have identical $\Upsilon$. 

Hom's result is further exploited in \cite{kim_livingston_2018}, where Kim and Livingston defined the \emph{secondary Upsilon invariant} $\uu_{K,t}(s)$ for knot $K\subset S^3$, $t\in (0,2)$ and $s\in[0,2]$. It is again a smooth concordance invariant. They gave an example of a knot which has vanishing $\Upsilon$ but nontrivial $\uu$ and is therefore not slice. They also constructed an infinite set of complexes for which the $\Upsilon$ vanishes but could be shown to be independent using $\uu$. However, whether these complexes arise from actual knots had not been determined.

Later, in \cite{Allen17} Allen used $\uu$ to construct pairs of knots where each pair had identical $\Upsilon$ but were not smoothly concordant because of differing $\uu$. More concretely, she proved that $\cinf(T(p, p+2))$ and $\cinf(T(2,p)\#T(p,p+1))$ are not stably equivalent. She conjectured a generalized version, which we will answer affirmatively in Theorem \ref{thm2}.

In this paper we will calculate $\uu$ for several torus knots which will be useful for two goals. Firstly, we strengthen the results of \cite{kim_livingston_2018}. We will construct an infinite family of knots with vanishing $\Upsilon$ invariant and use $\uu$ to show that these knots are linearly independent in the smooth knot concordance group. More concretely, although the following linear independence follows directly from the well-known Litherland's theorem, we will show that it can also be proved using $\uu$:

\begin{thm}
\label{thm1}
Let $K_p = T(p, p+1) \# T(2,p) \# -T(p,p+2)$ for any odd $p \ge 5$, then $\Upsilon_{K_p}(t) = 0$ for any $t\in[0,2]$, but $\uu_{K_p,s}(s) = -\frac{4(p-2)}{p}$ for $s = \frac{4}{p}$. 
This implies that $K_p$ are linearly independent in the smooth concordance group. 
\end{thm}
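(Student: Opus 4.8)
The plan has three parts: the vanishing of $\Upsilon_{K_p}$, the value of $\uu_{K_p,s}(s)$ at $s=4/p$, and the deduction of linear independence. The first part is formal. Since $p$ is odd, $\gcd(2,p)=\gcd(p,p+2)=1$, so the three summands are genuine knots. By additivity of $\Upsilon$ under connected sum and $\Upsilon_{-K}=-\Upsilon_K$,
\[
\Upsilon_{K_p}(t)=\Upsilon_{T(p,p+1)}(t)+\Upsilon_{T(2,p)}(t)-\Upsilon_{T(p,p+2)}(t) ,
\]
and the Feller--Krcatovich relation with $q=p+2$, together with $T(p,2)=T(2,p)$, gives $\Upsilon_{T(p,p+2)}(t)=\Upsilon_{T(2,p)}(t)+\Upsilon_{T(p,p+1)}(t)$, so the three terms on the right cancel for all $t\in[0,2]$.

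The heart of the matter is the value of $\uu_{K_p,s}(s)$ at $s=4/p$. Each of $T(p,p+1)$, $T(2,p)$, $T(p,p+2)$ is an L-space knot, so its $\cinf$ is a staircase complex read off from the numerical semigroup generated by its two parameters, and $\cinf(K_p)$ is the tensor product of two of these staircases with the dual of the third. I would first write the three staircases out explicitly near the diagonal that governs $t=s=4/p$. Using Livingston's reformulation of $\Upsilon$ from \cite{livingston_2017}, the next task is to identify $4/p$ as the smallest parameter at which the secondary invariant of $K_p$ is forced to be nonzero and to record, in each staircase, a cycle realizing $\Upsilon$ at $t=4/p$. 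Finally I would feed this into the connected-sum estimates for $\uu$ from \cite{kim_livingston_2018}: since $\Upsilon_{K_p}\equiv 0$, the $\Upsilon$-contributions of the three summands cancel at $t=4/p$, and in this regime those estimates should sandwich $\uu_{K_p,4/p}(4/p)$ between two expressions built from $\uu$ of the torus-knot summands; computing the torus-knot terms directly from the staircases and checking the bounds agree should yield $-\tfrac{4(p-2)}{p}$. The same analysis should also give $\uu_{K_p,s}(s)=0$ for $0<s<4/p$ (and likewise for $-K_p$, whose complex is again a tensor of staircases) and $\uu_{nK_p,4/p}(4/p)\neq 0$ for all $n\neq 0$; both are used below.

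For linear independence, suppose $J=\#_p\,n_p K_p$ is smoothly slice, let $p_0$ be the largest index with $n_{p_0}\neq 0$, and write $J=n_{p_0}K_{p_0}\#J'$ with $J'=\#_{p<p_0}\,n_pK_p$; evaluate all invariants at $s=4/p_0$. For $p<p_0$ we have $4/p_0<4/p$, so $\uu_{\pm K_p,4/p_0}(4/p_0)=0$ by the previous step; subadditivity of $\uu$, with its vanishing for the unknot, forces $\uu_{\pm J',4/p_0}(4/p_0)=0$. Applying subadditivity to $J=n_{p_0}K_{p_0}\#J'$ and to $n_{p_0}K_{p_0}\simeq J\#(-J')$, and using $\uu_J\equiv 0$, we obtain $\uu_{n_{p_0}K_{p_0},4/p_0}(4/p_0)=0$, contradicting $\uu_{n_{p_0}K_{p_0},4/p_0}(4/p_0)\neq 0$. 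Hence all $n_p$ vanish.

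The one substantial step is the computation of $\uu_{K_p,4/p}(4/p)$: the connected-sum estimates give only one-sided control a priori, so the delicate point is to make the upper and lower bounds meet --- equivalently, to exhibit the cycle in the triple tensor product $\cinf(K_p)$ that is simultaneously optimal for the grading at $t=4/p$ and realizes exactly the claimed defect at $s=4/p$, and to rule out any better cycle. I expect this staircase bookkeeping, and in particular the point where the hypothesis $p\ge 5$ is used, to be where essentially all the work lies.
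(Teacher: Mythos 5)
Your outline of the formal parts is correct and matches the paper: $\Upsilon_{K_p}\equiv 0$ follows from additivity plus the Feller--Krcatovich relation, and the linear-independence argument (evaluate at $s=4/p_0$ for the largest index present, use subadditivity in both directions) is structurally the paper's argument. But the core of the theorem --- the value $\uu_{K_p,4/p}(4/p)=-\tfrac{4(p-2)}{p}$ --- is exactly the step you defer ("I expect this staircase bookkeeping\dots to be where essentially all the work lies"), so the proposal has a genuine gap where the paper has most of its content. The paper does not work in the triple tensor product at all. Instead it computes $\uu$ of each summand separately from its staircase: $\uu_{T(p,p+1)}(4/p)=-\tfrac{4(p-2)}{p}$ and $\uu_{T(p,p+2)}(4/p)=-\tfrac{4(p-3)}{p}$ (by locating the support line through the relevant white dots, verifying via a semigroup-counting estimate that all other dots lie strictly above it, and reading off the connecting black dot), $\uu_{T(2,p)}(4/p)=\infty$ (its first jump value is $2/2=1>4/p$), and $\uu_{-T(p,q)}\equiv\infty$ for every negative torus knot (the dual of a staircase has a unique grading-$0$ cycle, so $\mathcal{Z}^+=\mathcal{Z}^-$). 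The mechanism that makes your "upper and lower bounds meet" is then a two-sided subadditivity lemma: if $\min\{\uu_J(s),\uu_{-J}(s)\}>m$ then $\uu_{K\#J}(s)=m\Leftrightarrow\uu_K(s)=m$, proved by applying subadditivity to both $K\#J$ and $K\simeq K\#J\#(-J)$. Since the auxiliary summands and their mirrors all have $\uu$ at $s=4/p$ either infinite or equal to $-\tfrac{4(p-3)}{p}>-\tfrac{4(p-2)}{p}$, this pins the connected-sum value exactly. Without those individual staircase computations your sandwich has no teeth.

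Two smaller problems. First, "trivial" $\uu$ is $+\infty$ (it is defined as $-2\gamma^2+2\gamma$ with $\gamma^2=-\infty$ when $\mathcal{Z}^\pm$ meet), not $0$; with the value $0$ your step "subadditivity forces $\uu_{\pm J'}=0$" does not follow, since subadditivity only gives a lower bound. Second, the blanket claim that $\uu_{K_p,s}(s)$ is trivial for all $0<s<4/p$ is doubtful at $s=2/p$, where both $T(p,p+1)$ and $T(p,p+2)$ jump with the same value $-\tfrac{2(p-1)}{p}$ and the cancellation in the connected sum is not controlled by subadditivity. What you actually need --- and what the paper verifies --- is triviality of each torus-knot summand of $K_p$ at the specific points $s=4/p_0$ for odd $p_0>p$; this holds because the only jump value of these summands in $(0,4/p)$ is $2/p$, and $4/p_0=2/p$ would force $p_0=2p$, contradicting $p_0$ odd. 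You would also need to handle the sign of the leading coefficient (the paper takes $c_P>0$ without loss of generality by mirroring), since the inequality $\uu_{K_{p_0}}<\uu_{-K_{p_0}}$ needed for the iterated-sum lemma only runs one way.
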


Secondly, we will prove the Conjecture 5.3 in \cite{Allen17}, which is the following:
\begin{thm}
\label{thm2}
For all $p\ge 5$ and $2 \le k \le p-2$ such that $\gcd(p,k) = 1$, the knot complex $\cinf(T(p, p+k))$ is not stably equivalent to $\cinf(T(k,p)\#T(p,p+1))$.
\end{thm}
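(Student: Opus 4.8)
The plan is to exploit the fact that $\uu$ is an invariant of the stable equivalence class of $\cinf$: it is defined purely from the filtered complex and, via Hom's theorem, descends to a smooth concordance invariant, so two stably equivalent complexes have the same $\uu$. Hence it suffices to produce one pair $(t_0,s_0)$ with
\[ \uu_{T(p,p+k),\,t_0}(s_0)\ \neq\ \uu_{T(k,p)\#T(p,p+1),\,t_0}(s_0). \]
Equivalently, setting $J_{p,k}=T(p,p+k)\,\#\,{-T(k,p)}\,\#\,{-T(p,p+1)}$: if the two complexes were stably equivalent, then tensoring the stable equivalence with $\cinf(-T(k,p))\x\cinf(-T(p,p+1))$ and using that $\cinf(K)\x\cinf(-K)$ is stably trivial would force $\cinf(J_{p,k})$ to be stably trivial, hence $\uu_{J_{p,k}}\equiv 0$; so it is enough to find $(t_0,s_0)$ with $\uu_{J_{p,k},\,t_0}(s_0)\neq 0$. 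Observe first that Feller--Krcatovich's identity with $q=p+k$ gives $\Upsilon_{T(p,p+k)}(t)=\Upsilon_{T(k,p)}(t)+\Upsilon_{T(p,p+1)}(t)$, and since $\Upsilon$ is additive, $J_{p,k}$ already has vanishing $\Upsilon$. Thus the discrepancy must be read off from $\uu$, and the natural place to look is a corner point $t_0$ of this common $\Upsilon$ function; the case $k=2$ of this is essentially Theorem \ref{thm1} (note $J_{p,2}=-K_p$).

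The concrete computation rests on the fact that torus knots are L-space knots, so $\cinf(T(p,p+1))$, $\cinf(T(k,p))$ and $\cinf(T(p,p+k))$ are staircase complexes determined by the numerical semigroups $\langle p,p+1\rangle$, $\langle k,p\rangle$ and $\langle p,p+k\rangle$, while $\cinf(T(k,p)\#T(p,p+1))\simeq\cinf(T(k,p))\x\cinf(T(p,p+1))$ is the associated staircase-times-staircase complex. Using Livingston's reinterpretation of $\Upsilon$ together with the Kim--Livingston recipe for $\uu$ — where $\uu_{K,t}(s)$ records the minimal $s$-filtration level at which one can choose a cycle that generates $H_*$ and simultaneously realizes the $t$-modified grading bound defining $\Upsilon_K(t)$ — I would: (i) identify the distinguished generator and the relevant region of the $(i,j)$-plane for $T(p,p+k)$ at $t_0$, and compute $\uu_{T(p,p+k),t_0}(s_0)$ directly from its staircase; (ii) compute $\uu_{T(k,p)\#T(p,p+1),t_0}(s_0)$ from the tensor-product model, tracking how the box summands arising from cross terms of the two staircases do or do not raise the $s$-filtration, reducing the work to the individual staircases by invoking the (sub)additivity of $\uu$ under connected sum at a shared singular value of $\Upsilon$ from \cite{kim_livingston_2018}; (iii) compare the two numbers. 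Guided by the $k=2$ case I expect the relevant value to be $\pm\tfrac{2k(p-k)}{p}$ at a corner such as $t_0=s_0=\tfrac{2k}{p}$, which is nonzero for all $p\ge 5$ and $2\le k\le p-2$.

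The main obstacle is making this uniform in $k$. For $T(p,p+1)$ and $T(k,p)$ the staircases are completely explicit, but the staircase of $T(p,p+k)$ depends on the more delicate gap structure of $\langle p,p+k\rangle$, so one must pin down which generator carries the $t$-modified homology and exactly which segment of the staircase controls $\uu$ near $t_0$, and one must show that the contribution of the tensor-product box summands on the connected-sum side is genuinely different rather than merely that the easy bounds fail to match. As in Theorem \ref{thm1}, I would localize both computations to a neighborhood of the corner point, where the controlling piece of each complex collapses to a short, essentially $k$-independent configuration from which the closed form $\pm\tfrac{2k(p-k)}{p}$ falls out; verifying that this localization is legitimate — that no distant part of the staircase interferes with either the $\Upsilon$-bound or the $s$-level — is the step I expect to demand the most care.
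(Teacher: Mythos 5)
Your overall strategy is the paper's: show the two complexes have different $\uu$ at a well-chosen point, computed from the staircases and combined across the connected sum. But as written the proposal is an outline with the decisive content deferred, and two of its specific commitments are off. First, the reduction to $J_{p,k}=T(p,p+k)\#-T(k,p)\#-T(p,p+1)$ misstates the target: a stably trivial complex has $\mathcal{Z}^+=\mathcal{Z}^-$, so its $\uu$ is identically $+\infty$, not $0$; you must exhibit a point where $\uu_{J_{p,k}}$ is \emph{finite}. Extracting a finite value of $\uu$ from a three-fold connected sum cannot be done with subadditivity alone, since that only gives $\uu_{K\#J}\ge\min\{\uu_K,\uu_J\}$; you need the upgrade to equality, which requires controlling $\uu$ of the \emph{mirrors} $-T(k,p)$ and $-T(p,p+1)$ as well (the paper does this via Lemma \ref{lem1} together with Proposition \ref{prop-}, which shows $\uu$ of any negative torus knot is identically $\infty$ because the dual staircase has a unique generating cycle). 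Your proposal never addresses the mirror computation, and without it the "reduce to the individual staircases" step in (ii) does not close.

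Second, the guessed location and value do not survive beyond $k=2$. The paper finds the discrepancy at $s=t=\tfrac{4}{p}$ for \emph{all} $k$, where $\uu_{T(p,p+1)}=-\tfrac{4(p-2)}{p}$, $\uu_{T(k,p)}=\infty$ (its first jump value is $\tfrac{2}{k}$ or later), and $\uu_{T(p,p+k)}$ equals $-\tfrac{4(p-k-1)}{p}$ when $2\le k<\tfrac{p}{2}$ but $-\tfrac{4(k-1)}{p}$ when $\tfrac{p}{2}<k\le p-2$ --- a case split driven by how the semigroup $\langle p,p+k\rangle$ fills in near $2p$, which your uniform guess $t_0=\tfrac{2k}{p}$, value $\pm\tfrac{2k(p-k)}{p}$, does not anticipate (for $k=2$ it happens to coincide with $\tfrac{4}{p}$, which is why it matches your sanity check). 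The hard part you correctly flag --- pinning down which staircase segment of $T(p,p+k)$ is the support line near the chosen corner and verifying no distant dot interferes --- is exactly Propositions \ref{prop3} and \ref{prop4} of the paper, carried out by explicit counting in the semigroup; until that is done, and the mirror issue above is resolved, the argument is a plausible plan rather than a proof.
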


This paper is arranged as follows: we introduce the knot complexes $\cinf$ in Section 2; then we define $\Upsilon$ and $\uu$ in Section 3 and list their properties; in Section 4 we carry out the calculations for $\uu$ and prove our theorems.

\subsection*{Acknowledgements:} I wish to thank my senior thesis advisor Peter Ozsv\'{a}th for introducing me to knot Floer homology and guiding me through my senior independent work.

\section{Knot complexes $\cinf(K)$}

To each knot $K \in S^3$ there is an associated bifiltered graded chain complex $\cinf(K)$, with \emph{Maslov} grading $M$, and two filtrations: the \emph{Alexander} filtration $\alex$ and the \emph{algebraic} filtration $\alg$. The boundary map is compatible with both filtrations and decreases the Maslov grading by $1$. Moreover $\cinf(K)$ is a finitely generated free module over $\f[U, U^{-1}]$ where $\f = \z / 2\z$, and the generators can be chosen to be bifiltered graded basis (the Heegaard Floer states). Multiplication by $U$ decreases the Maslov grading by $2$ and decreases both Alexander and algebraic filtrations by $1$. The homology (which is just the Heegaard Floer homology of the embedded manifold $S^3$) $\hf(S^3) = H_*(\cinf(K))$ is isomorphic to $\f[U, U^{-1}]$ as a module, with $1\in \f[U, U^{-1}]$ at (Maslov) grading 0. For each knot $K$, $\cinf(K)$ is well-defined up to bifiltered chain homotopy equivalence which we denote by $\simeq$. For more details see \cite{os03}.

For some knot $K$ we can represent $\cinf(K)$ as a diagram in the $(\alg, \alex)$ plane as in Figure \ref{figT34}.

\begin{figure}[ht]
\centering
\includegraphics[width = 8cm]{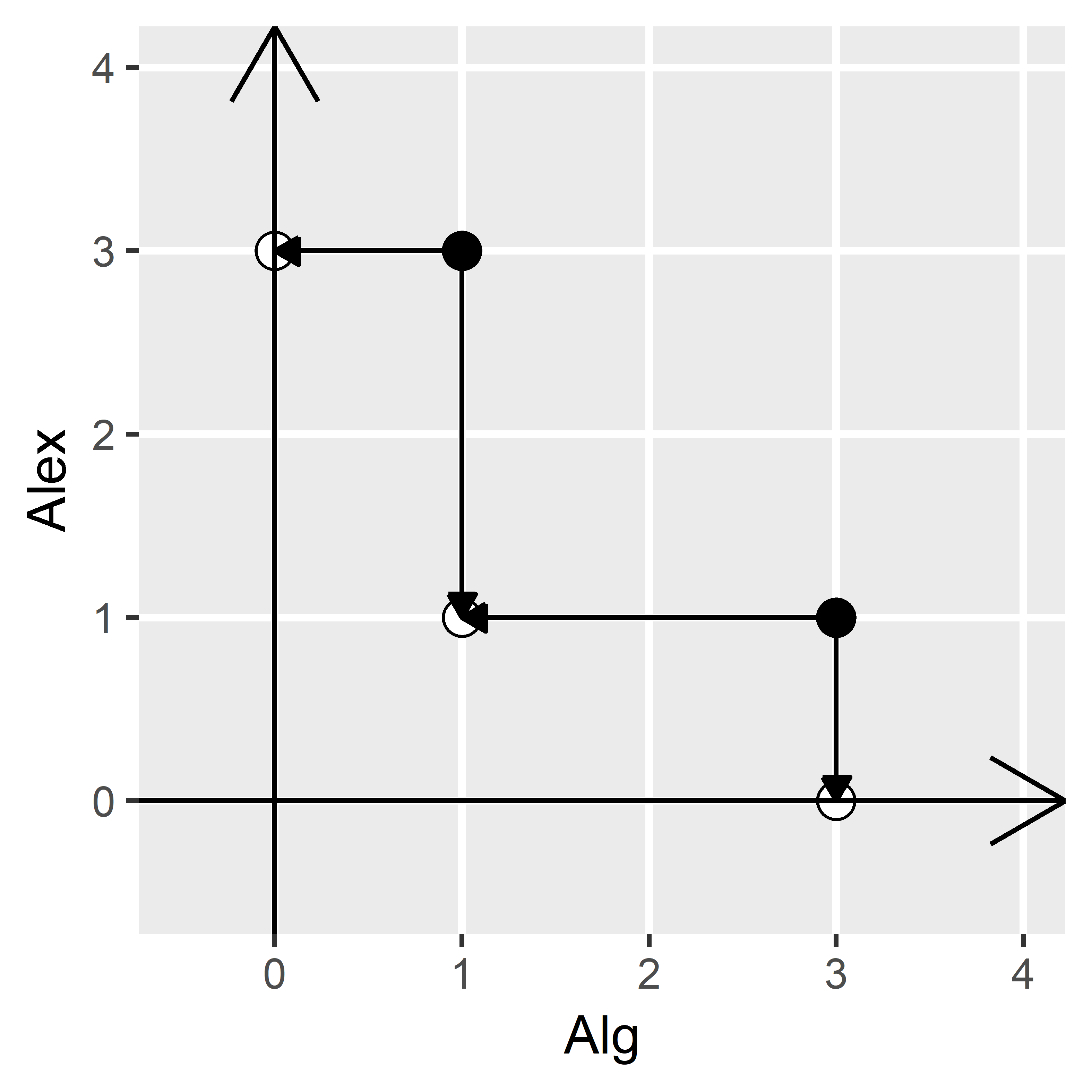}
\caption{Staircase diagram for $\cinf(T(3,4))$}
\label{figT34}
\end{figure}

Here the dots are the bifiltered graded basis of $\cinf(K)$ as an $\f[U, U^{-1}]$ module. The dot at $(i, j)$ has filtration levels $\alg = i$ and $\alex = j$. The differential map is represented by the arrows. Moreover the black dots represent generators with Maslov grading 1, and white dots grading 0. Keep in mind that the full complex is obtained by taking diagonal translates of the diagram above because multiplication by $U$ shifts both filtrations by $1$. However we will hide this structure unless we need to explicitly show the $U$-action.

In general $\cinf(K)$ may have multiple generators at one $(\alg, \alex)$ filtration level. In such cases we may represent the generators as dots in the unit square whose lower-left vertex is $(i, j)$. However, there is an interesting class of knots for which the complex is very heavily constrained and the diagram is simple: the \emph{L-space knots}. Recall that a closed three-manifold Y is called an \emph{L-space} if $H_1(Y,\mathbb{Q}) = 0$ and
$\widehat{HF}(Y)$ is a free abelian group whose rank coincides with the number of elements in $H_1(Y,\mathbb{Z})$; and a knot $K$ is said to be an \emph{L-space knot} if for some positive integer $p$, the $p$-surgery on $K\subset S^3$ gives an L-space. It was shown in \cite{os05} that for any L-space knot $K$, the complex $\cinf(K)$ is always a $\emph{staircase}$ complex like the figure above. In such cases the complex is determined by the Alexander polynomial of $K$ in the following way. The Alexander polynomial for an $L$-space knot $K$ always has form:
\[\Delta_K(t) = \sum_{i=0}^d(-1)^it^{a_i}\]
for some strictly increasing sequence $\{a_i\}$. Then $\cinf(K)$ is a staircase of form
\[[a_1-a_0, a_2-a_1, \dots, a_d - a_{d-1}]\]
where indices alternate between the horizontal and vertical steps. More explicitly, we start with a white dot at some point in the $(\alg, \alex)$ plane. Go right $a_1-a_0$ and draw a black dot, go down $a_2-a_1$ and draw a white dot, and repeat the procedure until we've used all the steps in the array above. Draw arrows from each black dot to the two adjacent white dots. Finally, translate the whole staircase such that the leftmost white dot has algebraic level $0$ and the lowermost white dot has Alexander level $0$. This gives us the desired diagram which represents $\cinf(K)$. It is worth noting that all torus knots are L-space knots. For more details see \cite{os05, borodzik_livingston_2014}.

As an example, the torus knot $K = T(3, 4)$ has Alexander polynomial
\[ \Delta_K(t) = 1 - t + t^3 - t^5 + t^6 \]
so $\cinf(K)$ is a staircase $[1, 2, 2, 1]$, as shown in Figure \ref{figT34}.

Here are some properties of $\cinf(K)$ that will be useful for us. For proofs see \cite{os04, hom_2017}.

\begin{defn}[Stable equivalence]
Two complexes $\cinf(K_1)$ and $\cinf(K_2)$ are said to be \emph{stably equivalent} if there exist acyclic complexes $A_1$ and $A_2$ such that 
\[\cinf(K_1) \oplus A_1 \simeq \cinf(K_2) \oplus A_2\]
where acyclic is in the sense of graded chain complex, i.e. forgetting about filtrations, the homology vanishes.

\end{defn}

\begin{thm}
\label{thm cinf}
For any two knots $K, J \in S^3$ we have:

(1) $\cinf(K) \x \cinf(J) \simeq \cinf(K\#J)$. Recall that $\#$ means the connect sum (of knots) and $\simeq$ means bifiltered chain homotopy equivalence.

(2) $\cinf(-K) \simeq \cinf(K)^*$ where $^*$ means taking dual complex.

(3) If $K$ and $J$ are concordant then $\cinf(K)$ and $\cinf(J)$ are stably equivalent.

\end{thm}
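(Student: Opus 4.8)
The three parts have distinct characters, so the plan is to treat them separately, all building on manipulations of doubly-pointed Heegaard diagrams and the behavior of holomorphic disk counts under those manipulations.

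For (1), I would begin with doubly-pointed Heegaard diagrams $(\Sigma_1, \alpha_1, \beta_1, w_1, z_1)$ and $(\Sigma_2, \alpha_2, \beta_2, w_2, z_2)$ representing $K$ and $J$, and build a diagram for $K \# J$ by forming the connected sum of the two surfaces in a small region near a basepoint of each, retaining a single surviving pair of basepoints. The intersection points of the glued diagram are then in natural bijection with pairs of intersection points from the two factors, and both the Alexander and algebraic filtration levels simply add across the connected sum. The analytic core is a neck-stretching argument: for an almost complex structure sufficiently stretched along the connect-sum neck, every holomorphic disk contributing to the differential of the glued complex degenerates into a pair of disks, one in each factor. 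This forces the differential to be the tensor-product differential and yields the bifiltered, Maslov-graded identification $\cinf(K) \x \cinf(J) \simeq \cinf(K \# J)$; the precise gluing and degeneration analysis is carried out in \cite{os04}.

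For (2), the mirror $-K$ is represented by the Heegaard diagram for $K$ with the orientation of $\Sigma$ reversed. Orientation reversal interchanges the roles of the two boundary arcs of each holomorphic disk, which has the effect of transposing the matrix of the differential, and it simultaneously negates the Maslov grading as well as both filtration levels. Assembling these reversals shows that the resulting complex is exactly the dual complex $\cinf(K)^*$, where dualizing means dualizing the underlying $\f[U, U^{-1}]$-module, negating gradings, and reflecting the filtration. The substance of this part is careful bookkeeping of gradings and filtration levels rather than any new analytic input.

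The main obstacle is (3), which is the deepest of the three. A smooth concordance from $K$ to $J$ is realized as a genus-zero cobordism $C$ in $S^3 \times [0,1]$, and I would exploit the bifiltered, grading-preserving chain maps that $C$ and its turned-around copy induce on knot Floer homology via the Ozsv\'{a}th--Szab\'{o} cobordism maps. The key point is that because a concordance is an invertible cobordism, these maps induce isomorphisms on the underlying (unfiltered) homology $\hf(S^3)$, so each is a filtered quasi-isomorphism. A general homological-algebra lemma then converts such filtered quasi-isomorphisms into a stable equivalence: the mapping cone of such a map is acyclic, and absorbing it produces acyclic complements $A_1$ and $A_2$ together with the required bifiltered homotopy equivalence $\cinf(K) \oplus A_1 \simeq \cinf(J) \oplus A_2$. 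I expect the technical heart to lie in establishing the filtered behavior of the cobordism maps and in verifying that the complements are genuinely acyclic as graded complexes; this is precisely the content of \cite{hom_2017}. Unlike (1) and (2), which are essentially structural, part (3) carries the real analytic and homological-algebra weight.
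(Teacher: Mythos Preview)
The paper does not actually prove this theorem: immediately before the statement it says ``For proofs see \cite{os04, hom_2017}'' and then moves on. So there is nothing to compare your argument against line by line; the paper is simply quoting known results.

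That said, your sketch is a faithful outline of how parts (1) and (2) are established in \cite{os04}: the connected-sum K\"unneth formula via neck-stretching on a glued doubly-pointed Heegaard diagram, and duality for the mirror via orientation reversal of the Heegaard surface. For part (3), your proposed route through cobordism-induced filtered chain maps and mapping cones is valid, but it is not quite the argument in \cite{hom_2017}. Hom instead observes that $K$ concordant to $J$ forces $K\#(-J)$ to be slice, shows directly that a slice knot has $\cinf \simeq \cinf(\text{unknot})\oplus A$ with $A$ acyclic, and then combines this splitting with (1) and (2) to extract the stable equivalence. Your approach requires the machinery of filtered cobordism maps (closer in spirit to later work of Zemke), whereas Hom's stays entirely within the algebra of the complexes themselves once the slice-knot splitting is in hand. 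Both reach the same conclusion; Hom's is more self-contained given (1) and (2), while yours is more conceptual about why concordance should act this way.
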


\begin{rmk}
For (2), when we are using the aforementioned diagrams to represent $\cinf(K)$, then $\cinf(K)^*$ can be obtained by rotating the plane by 180$^\circ$ and reversing the arrows.
\end{rmk}

\section{The definition of $\Upsilon$ and $\uu$ invariants}

The $\Upsilon$ invariant was first introduced by Ozsv\'{a}th, Stipsicz and Szab\'{o} in \cite{oss17}. Later Livingston gave a reinterpretation in \cite{livingston_2017}. Here we will follow Livingston's construction because it will naturally lead to the definition of $\uu$. Note that all the definitions in this section work for arbitrary knot, not just for those with a staircase representation.

Fix any $t\in [0,2]$. For bifiltered chain complex $\cinf(K)$, we may define a new real filtration $\mathcal{F}^t = \{C^t_s\}$, with $C^t_s$ generated by basis elements satisfying
\[\frac{t}{2}\alex + (1-\frac{t}{2})\alg \le s\]
Although the construction depends on bifiltered basis, the subcomplexes $C^t_s$ are actually independent of the choice of bifiltered basis. 

\begin{defn}[$\gamma$ and $\Upsilon$]
For each $t\in [0,2]$,
\[\gamma_K(t) \coloneqq \min\Big\{s\Big|\mathrm{Image}\big(H_0(C^t_s)\rightarrow H_0(\cinf(K))\big) \text{is non trivial} \Big\}\]
Note that $H_0$ means we are focusing on grading-0 elements. Then define:
\[\Upsilon_K(t) = -2 \gamma_K(t)\]
\end{defn}

\begin{rmk}
In our diagram representation of $\cinf(K)$, the subcomplexes $C^t_s$ are represented by the half planes to the lower-left of a line with slope $m = 1-\frac{2}{t}$ and intersect $\frac{2s}{t}$. We denote this line by $\mathcal{L}_{t,s}$ and call it the \emph{support line}. Intuitively, as $s$ increases this line with slope $m$ is moving upwards, and we stop as soon as our half plane includes a cycle that represents the grading-0 generator of $H(\cinf(K)$.
\end{rmk}

As an example, for $K = T(3, 4)$ we can use Figure \ref{figT34} to find the $\mathcal{F}^t$-filtration level of each white dot and get:
\[\Upsilon_K(t) = 
\left\{
    \begin{array}{ll}
	-3t & \mbox{if } t\in[0,\frac{2}{3}]\\
    -2  & \mbox{if } t\in[\frac{2}{3}, \frac{4}{3}]\\
    3t-6& \mbox{if } t\in[\frac{4}{3}, 2]
	\end{array}
\right.
\]

We summarize several properties of $\Upsilon$ as in \cite{oss17}:

\begin{thm}
Let $K, J$ be knots in $S^3$.

(1) $\Upsilon_K(t)$ is a continuous piecewise linear function

(2) The singularities of $\Upsilon'_K(t)$ can only occur at values of $t$ such that $\mathcal{L}_{t, \Upsilon_K(t)}$ passes at least two lattice points $(i,j)$ and $(i', j')$ that are grading-0 bifiltered generators of $\cinf(K)$.

(3) $\Upsilon_{K\#J}(t) = \Upsilon_K(t) + \Upsilon_J(t)$

(4) If $K$ is slice then $\Upsilon_K(t) = 0, \ \forall t$.
\end{thm}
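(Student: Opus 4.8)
The plan is to read all four statements directly off the description of $\gamma_K$ just introduced (this is essentially how it is done in \cite{oss17,livingston_2017}), after one reduction. Because $\cinf(K)$ is a finitely generated free $\f[U,U^{-1}]$-module, a homogeneous chain of Maslov grading $0$ uses each free generator in exactly one forced $U$-power — and not at all if that generator's grading is odd — so the $\f$-vector space of grading-$0$ chains is finite dimensional and hence a \emph{finite} set. Let $Z_0(K)$ denote the finite, nonempty set of grading-$0$ cycles whose class generates $H_0(\cinf(K))\cong\f$, and for a chain $z=\sum_x c_x\, x$ (the sum over $U$-translates $x$ of bifiltered generators) set $g_t(z)=\max\{g_t(x):c_x\neq0\}$, where $g_t(x)=\tfrac t2\alex(x)+(1-\tfrac t2)\alg(x)$ is the $\mathcal F^t$-level of $x$. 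Unwinding the definition of $C^t_s$, a cycle lies in $C^t_s$ exactly when all its terms have $g_t\le s$, so
\[\gamma_K(t)=\min_{z\in Z_0(K)}g_t(z)=\min_{z\in Z_0(K)}\ \max_{x\in z}\ \Big(\tfrac t2\alex(x)+(1-\tfrac t2)\alg(x)\Big).\]
Thus $\gamma_K$, and $\Upsilon_K=-2\gamma_K$, is a minimum of finitely many maxima of affine functions of $t$.

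I would then handle \emph{(1)} and \emph{(2)} together. Each affine function $t\mapsto g_t(x)$ is continuous, so every $g_t(z)$ is continuous, convex and piecewise linear on $[0,2]$; the finite minimum $\gamma_K$ and $\Upsilon_K=-2\gamma_K$ are therefore continuous piecewise linear, giving (1). For (2), note that the affine piece $t\mapsto-2g_t(x)$ has slope $\alg(x)-\alex(x)$, which depends only on the lattice point $x$. Fix $t_0$ and pass to a small interval on which no two of the finitely many functions $t\mapsto-2g_t(x)$ cross except possibly at $t_0$. On each of the two half-intervals one may choose a minimizing cycle $z^{\pm}\in Z_0(K)$ and a term $x^{\pm}\in z^{\pm}$ realizing its maximum, both constant in $t$, so $\Upsilon_K$ agrees there with the affine function $-2g_t(x^{\pm})$. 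If $\Upsilon'_K$ has a singularity at $t_0$ these two slopes differ, forcing $x^-\neq x^+$ as lattice points, while continuity gives $g_{t_0}(x^-)=g_{t_0}(x^+)=\gamma_K(t_0)$; hence $x^-$ and $x^+$ are two distinct grading-$0$ generators of $\cinf(K)$ lying on $\mathcal L_{t_0,\gamma_K(t_0)}$, which is (2).

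Next \emph{(3)} and \emph{(4)}. Since $C^t_s$ is built from the $\alg$- and $\alex$-filtrations, every bifiltered chain homotopy equivalence is $\mathcal F^t$-filtered, hence induces isomorphisms on each $H_*(C^t_s)$ compatibly with the maps to $H_*(\cinf)$; so $\gamma_K$ is a $\simeq$-invariant and, by Theorem~\ref{thm cinf}(1), can be computed from $\cinf(K)\x\cinf(J)$, on which $g_t$ is additive: $g_t(x\x y)=g_t(x)+g_t(y)$. By the K\"unneth theorem the generator of $H_0(\cinf(K)\x\cinf(J))$ is $[z_K\x z_J]$ for generators $z_K,z_J$, and taking these optimal at $t$ gives $\gamma_{K\#J}(t)\le\gamma_K(t)+\gamma_J(t)$. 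For the reverse, pick $s_1<\gamma_K(t)$ and $s_2<\gamma_J(t)$; the long exact sequences of the pairs show the generator of $H_0(\cinf(K))$ survives in $H_0(\cinf(K)/C^t_{s_1})$ and likewise for $J$, so by K\"unneth the generator of $H_0(\cinf(K\#J))$ maps nontrivially to $H_0$ of $(\cinf(K)/C^t_{s_1})\x(\cinf(J)/C^t_{s_2})$. Since $g_t(x)+g_t(y)\le s_1+s_2$ forces $g_t(x)\le s_1$ or $g_t(y)\le s_2$, this complex is a quotient of $\cinf(K\#J)/C^t_{s_1+s_2}$, so $\gamma_{K\#J}(t)>s_1+s_2$; letting $s_1\nearrow\gamma_K(t)$ and $s_2\nearrow\gamma_J(t)$ yields $\gamma_{K\#J}(t)\ge\gamma_K(t)+\gamma_J(t)$, hence (3). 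Finally, a slice knot $K$ is concordant to the unknot, so by Theorem~\ref{thm cinf}(3) $\cinf(K)$ is stably equivalent to $\cinf(\text{unknot})$; an acyclic summand has vanishing $H_0$ and so contributes nothing to the image defining $\gamma$, while $\cinf(\text{unknot})$ is a single grading-$0$ generator at the origin with $\gamma\equiv0$. Hence $\Upsilon_K\equiv0$, which is (4).

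The only genuinely delicate step is the reverse inequality in (3): the subcomplexes $C^t_s$ are modules over $\f[U]$ but not over $\f[U,U^{-1}]$, so the K\"unneth argument above should be carried out grading-by-grading over the field $\f$ (or simply imported from \cite{livingston_2017,oss17}); everything else is elementary bookkeeping with the support lines $\mathcal L_{t,s}$.
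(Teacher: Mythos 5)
The paper does not prove this theorem; it is quoted as a summary of known properties with a citation to \cite{oss17} (and the reformulation used is Livingston's from \cite{livingston_2017}). Your argument is correct and follows exactly the standard route in those references: the min--max description of $\gamma_K(t)$ over the finite set of grading-$0$ generating cycles gives (1) and (2) immediately, the K\"unneth/quotient argument gives (3), and (4) is the same observation the paper itself makes after the theorem (acyclic summands do not change the minimal $s$, so stable equivalence with the unknot complex forces $\Upsilon\equiv 0$). The one point you rightly flag as delicate --- that the subquotients $C^t_s$ are only $\f[U]$-modules, so the K\"unneth step in (3) must be run over $\f$ (or over $\f[U]$) rather than over $\f[U,U^{-1}]$ --- is exactly where the published proofs also take care, and your sketch of how to handle it is adequate; no gap remains.
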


By (3)(4) we see that $\Upsilon_{-K}(t) = -\Upsilon_K(t)$. We also see that $\Upsilon$ is a concordance invariant and indeed a homomorphism from the smooth knot concordance group to the additive group of piecewise linear functions on $[0,2]$. The fact that $\Upsilon$ is a concordance invariant can also be seen directly from Theorem \ref{thm cinf}(3), the idea being that acyclic summands doesn't affect the minimal $s$ as in the definition of $\Upsilon$.

However, in some sense the $\Upsilon$-invariant is only exploiting the "outermost" layer of information of the $\cinf(K)$ complex. Indeed in its definition we move the support line $\mathcal{L}_{t,s}$ but stops as soon as we hit a desired cycle. The rest of the complex is not touched at all. Some of this lost information can be recovered by $\uu$ defined by Kim and Livingston in \cite{kim_livingston_2018}:

\begin{defn}[Pivot points]

Recall that $\gamma_K(t) = -\frac{1}{2}\Upsilon_K(t)$. Let $\mathcal{P}$ be the set of bifiltration levels of elements of $\cinf(K)$. The support line $\mathcal{L}_{t, \gamma(t)}$ will always contain a nonempty subset of $\mathcal{P}$, which we denote by $\mathcal{P}_t$. For each fixed $t$ and any sufficiently small $\delta$ $\mathcal{P}_{t+\delta}$ contains exactly one element, which we denote by $p_t^+$. Similarly $\mathcal{P}_{t-\delta}$ contains exactly one element which we denote by $p_t^-$. These will be called the \emph{positive} and \emph{negative pivot points} at $t$. 
\end{defn}

\begin{figure}[ht]
\centering
\includegraphics[width = 12cm]{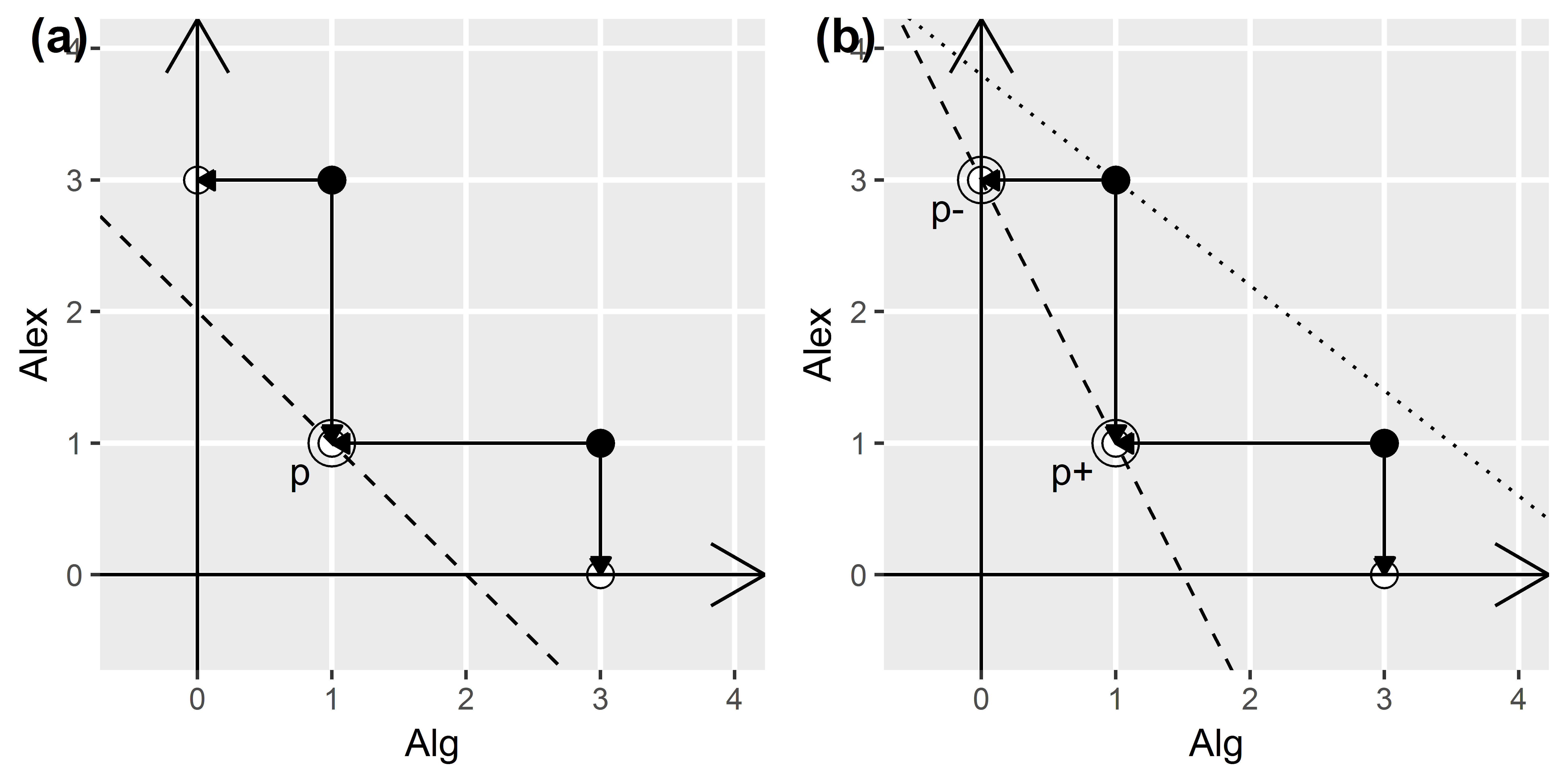}
\caption{Staircase diagram for $\cinf(T(3,4))$}
\label{figT34gammas}
\end{figure}

As an illustration, see Figure \ref{figT34gammas} where $K = T(3,4)$, the left diagram is for $t = 1$ and the right diagram is for $t = \frac{2}{3}$. The dashed lines are the support lines $\mathcal{L}_{t, \gamma(t)}$. Note that for generic $t$ the pivot points $p_t^\pm$ coincide, such as in Figure \ref{figT34gammas}(a). $\Upsilon'$ will be well defined at $t$ and its value determined by the coordinates of the pivot point. $p_t^+$ and $p_t^-$ will only be different at jumps of $\Upsilon'$, such as Figure \ref{figT34gammas}(b). For a more complete discussion see \cite{livingston_2017, kim_livingston_2018}.

Now for each fixed $t\in(0,2)$ and sufficiently small $\delta$ as above, let $t^\pm = t \pm \delta$. Let $\mathcal{Z}^\pm$ be the set of cycles in $\mathcal{F}_{t^\pm, \gamma_K(t^\pm)}$ that represent the nontrivial element in $H_0(\cinf(K))$. Note that by construction each $z \in \mathcal{Z^\pm}$ must be represented by a set of vertices that includes one at the lattice point $p_t^\pm$. For Figure \ref{figT34gammas}(a), $\mathcal{Z^\pm} = \{(1,1)\}$. For Figure \ref{figT34gammas}(b), $\mathcal{Z^+} = \{(1,1)\}$ and $\mathcal{Z^-} = \{(0,3)\}$. 

\begin{defn}[$\gamma^2$]
For each $t\in(0,2)$, let $\mathcal{Z}^\pm$ be defined as above. For any $s\in [0,2]$ let $\gamma^2_{K, t}(s)$ be the minimal value of $r$ such that for some $z^+ \in \mathcal{Z}^+$ and $z^- \in \mathcal{Z}^-$, $z^\pm$ represents the same homology class in $H_0(C^t_{\gamma_K(t)} + C^s_r)$.
\end{defn}

For example, in Figure \ref{figT34gammas}(a) $\mathcal{Z}^\pm$ coincide so $\gamma^2 = -\infty$. In (b), $\mathcal{Z}^\pm$ are connected by the black dot $(1,3)$, which will determine $\gamma^2_{K, t}(s)$. See the dotted line in (b).

\begin{rmk}
In particular, if $\mathcal{Z}^\pm$ are not disjoint then $\gamma^2_{K, t}(s) = -\infty$. This is the case for all but finitely many $t$. For notation simplicity, we will call such $t$ \emph{no-jump-values}, and for any $t$ such that $\mathcal{Z}^\pm$ are disjoint we say $t$ is a \emph{jump-value}. For example, for $K = T(3,4)$, we see that $t = 1$ is a no-jump-value, whereas $t = \frac{2}{3}$ is a jump-value.
\end{rmk}

\begin{defn}[$\uu$]
\[\uu_{K, t}(s) \coloneqq -2\gamma^2_{K, t}(s) - \Upsilon_K(t) = -2(\gamma^2_{K, t}(s) - \gamma_K(t))\]
\end{defn}

We summarize several properties of $\uu$. For complete proofs see \cite{kim_livingston_2018}.

\begin{thm}
\label{uuthm}
Let $K$, $J$ be knots.

(1) $\uu$ is a concordance invariant.

(2) $\uu_{K, t}(s) = \infty$ for every $t$ such that $\mathcal{L}_{t, \gamma(t)}$ passes only one Maslov grading-0 $\f$ generator of $\cinf(K)$. This is true for all but finitely many $t$.

(3) Subadditivity when $s = t$:
\[ \uu_{K\#J, t}(t) \ge \min\{\uu_{K,t}(t), \uu_{J,t}(t)\} \]
\end{thm}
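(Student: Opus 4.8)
The plan is to treat the three assertions separately, deriving each from the definitions together with the structural results of Theorem \ref{thm cinf} and the additivity of $\Upsilon$. For (1), I would reduce concordance invariance to invariance of $\uu$ under the two moves that generate stable equivalence. By Theorem \ref{thm cinf}(3), concordant knots $K$ and $J$ satisfy $\cinf(K)\oplus A_1 \simeq \cinf(J)\oplus A_2$ with $A_i$ acyclic, so it suffices to show that $\gamma_K(t)$ and $\gamma^2_{K,t}(s)$ are unchanged (i) under a bifiltered chain homotopy equivalence and (ii) under direct sum with an acyclic complex. For (i), a bifiltered equivalence preserves every filtration level and induces, for all $t,s$, isomorphisms $H_*(C^t_s)\xrightarrow{\sim} H_*((C')^t_s)$ compatible with the inclusion-induced maps into total homology; hence the minimal $s$ defining $\gamma$ and the minimal $r$ defining $\gamma^2$ are preserved, as are the sets $\mathcal{Z}^\pm$ up to the relevant homology. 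For (ii), the filtered subcomplexes split as $C^t_s(C\oplus A)=C^t_s(C)\oplus C^t_s(A)$ with differential respecting the splitting, and since $H_*(A)=0$ the image of $H_0(A^t_s)$ in $H_0(C\oplus A)$ vanishes; a class maps nontrivially to total homology exactly when its $C$-component does, giving $\gamma_{C\oplus A}=\gamma_C$, and the same bookkeeping through the splitting gives $\gamma^2_{C\oplus A}=\gamma^2_C$. This is the routine part.

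For (2), the value $\uu_{K,t}(s)=\infty$ is precisely the case $\gamma^2_{K,t}(s)=-\infty$ recorded in the Remark, so I would show this holds whenever the support line $\mathcal{L}_{t,\gamma(t)}$ meets only one Maslov grading-$0$ generator. In that case the unique such generator is forced to be both $p_t^+$ and $p_t^-$, and for small $\delta$ the minimizing cycles at $t^\pm$ can be taken to be the single cycle through this vertex, so $\mathcal{Z}^+\cap\mathcal{Z}^-\neq\varnothing$; a common cycle $z\in\mathcal{Z}^+\cap\mathcal{Z}^-$ lies in $C^t_{\gamma_K(t)}$ and represents the same class on both sides for every $r$, whence $\gamma^2=-\infty$. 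Finiteness follows because two distinct grading-$0$ lattice generators $(i,j)$ and $(i',j')$ lie on a common support line only for the single slope $m=(j-j')/(i-i')$, hence the single value $t=2/(1-m)$; as $\cinf(K)$ has finitely many generators there are finitely many such pairs, hence finitely many exceptional $t$. This dovetails with part (2) of the $\Upsilon$ theorem, which locates the singularities of $\Upsilon'$.

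The heart of the matter is (3), which I would prove through the identification $\cinf(K\#J)\simeq\cinf(K)\x\cinf(J)$ of Theorem \ref{thm cinf}(1). Unwinding $\uu=-2(\gamma^2-\gamma)$ and using additivity $\gamma_{K\#J}(t)=\gamma_K(t)+\gamma_J(t)$, the claimed inequality at $s=t$ is equivalent to the upper bound
\[\gamma^2_{K\#J,t}(t)\le \max\{\gamma^2_{K,t}(t)+\gamma_J(t),\ \gamma_K(t)+\gamma^2_{J,t}(t)\}.\]
To produce it I would exhibit one admissible pair of cycles and one connecting chain in the tensor complex. First I need the lemma that the pivot cycles of $K\#J$ can be taken to be the products $z_K^\pm\x z_J^\pm$; this rests on the fact that $\mathcal{F}^t$-levels add under $\x$, so the minimal representative of the $H_0$-generator of the tensor product at parameter $t^\pm$ is a product of minimal representatives of the factors, with pivot vertex $p_t^\pm(K)\x p_t^\pm(J)$. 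Granting this, let $w_K,w_J$ be grading-$1$ chains of $\mathcal{F}^t$-levels $\gamma^2_{K,t}(t),\gamma^2_{J,t}(t)$ with $\partial w_K=z_K^++z_K^-$ and $\partial w_J=z_J^++z_J^-$ (sums, since $\f=\z/2\z$). Then the telescoping chain
\[w \coloneqq w_K\x z_J^+ \,+\, z_K^-\x w_J\]
satisfies $\partial w = z_K^+\x z_J^+ + z_K^-\x z_J^-$ by the Leibniz rule, the cross terms cancelling in characteristic $2$ because $z_J^+$ and $z_K^-$ are cycles. Its $\mathcal{F}^t$-level is the maximum of the two summands' levels, namely $\max\{\gamma^2_{K,t}(t)+\gamma_J(t),\ \gamma_K(t)+\gamma^2_{J,t}(t)\}$, using that the pivot cycles $z_J^+,z_K^-$ sit at levels $\gamma_J(t),\gamma_K(t)$. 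Thus $w$ connects the $+$ and $-$ pivot cycles of $K\#J$ inside $C^t_r$ for $r$ equal to that maximum, yielding the displayed bound and hence subadditivity.

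The main obstacle is the lemma inside (3): verifying that the pivot data of $\cinf(K)\x\cinf(J)$ is genuinely the product of the pivot data of the factors, together with the filtration bookkeeping that \emph{every} vertex of the product cycles $z_K^\pm\x z_J^\pm$, not merely the pivot vertex, stays at $\mathcal{F}^t$-level $\le\gamma_{K\#J}(t)$. This is a genericity argument in $\delta$: one must check that for all sufficiently small $\delta$ the minimizing cycles at $t^\pm$ in the tensor product are simultaneously of product form and have their top vertex exactly at the product pivot. It is here that the finiteness of the generating set and the strict linear behavior of $\Upsilon$ near $t$ (so that the dominant vertex is unambiguous on each side) are used, and it is the step I expect to require the most care.
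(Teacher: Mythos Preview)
The paper does not actually prove this theorem: immediately before the statement it says ``We summarize several properties of $\uu$. For complete proofs see \cite{kim_livingston_2018}.'' There is therefore no in-paper argument to compare your proposal against. What you have sketched is essentially the Kim--Livingston proof: for (1) reduction to stable-equivalence invariance via Theorem~\ref{thm cinf}(3) and the observation that acyclic summands cannot contribute to the image in $H_0$; for (2) the remark that a unique grading-$0$ vertex on $\mathcal{L}_{t,\gamma(t)}$ forces $p_t^+=p_t^-$ and hence $\mathcal{Z}^+\cap\mathcal{Z}^-\neq\varnothing$, together with the finite count of slopes through pairs of lattice generators; and for (3) the telescoping chain $w=w_K\otimes z_J^{+}+z_K^{-}\otimes w_J$ in $\cinf(K)\otimes\cinf(J)$, whose boundary is $z_K^{+}\otimes z_J^{+}+z_K^{-}\otimes z_J^{-}$ by Leibniz in characteristic~$2$.

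You have also correctly located the only genuinely delicate point, namely the lemma that pivot cycles for the tensor product may be taken to be tensor products of pivot cycles of the factors and that every vertex of $z_K^{\pm}\otimes z_J^{\pm}$ lies at $\mathcal{F}^t$-level at most $\gamma_K(t)+\gamma_J(t)$. One small refinement: in the definition of $\gamma^2$ the connecting chain $w_K$ lives in $C^t_{\gamma_K(t)}+C^s_r$, not merely in $C^s_r$; when $s=t$ these are nested subcomplexes of the single filtration $\mathcal{F}^t$, so $w_K\in C^t_{\max\{\gamma_K(t),\gamma^2_{K,t}(t)\}}$, and this is exactly why the restriction $s=t$ is needed for the filtration levels of the two summands of $w$ to add cleanly and yield the displayed $\max$. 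With that adjustment your outline is the standard argument.
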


Finally, we record two facts about torus knots. The first is a standard classical result, see for example \cite{rolfsen_2003}; the second is proved by Feller and Krcatovich in \cite{feller_krcatovich_2017}.

\begin{thm}
\label{thmAlexPoly}
The Alexander polynomial for torus knot $T(p,q)$ is
\[\Delta(t) = \frac{(1-t^{pq})(1-t)}{(1-t^p)(1-t^q)}\]
\end{thm}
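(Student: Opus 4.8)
The plan is to compute $\Delta_{T(p,q)}(t)$ directly from a presentation of the torus knot group using Fox's free differential calculus, and then to recognize the output as the claimed rational function. I would start from the classical presentation $\pi_1(S^3\setminus T(p,q))\cong\langle x,y\mid x^{p}=y^{q}\rangle$, which one gets by van Kampen's theorem applied to the decomposition of $S^3\setminus T(p,q)$ into the two knot-exterior solid tori of the standard Heegaard torus, glued along the annulus $T\setminus N(K)$ (see \cite{rolfsen_2003}). Since $\gcd(p,q)=1$ the abelianization is infinite cyclic, and if $t$ denotes the image of the meridian then the relation $p\bar x=q\bar y$ forces $x\mapsto t^{q}$ and $y\mapsto t^{p}$ under $\mathbb{Z}[\pi_{1}]\to\mathbb{Z}[t^{\pm1}]$.

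Differentiating the single relator $r=x^{p}y^{-q}$ gives $\partial r/\partial x=1+x+\dots+x^{p-1}$ and $\partial r/\partial y=-x^{p}y^{-q}(1+y+\dots+y^{q-1})$, so after abelianizing the Alexander matrix is the $1\times 2$ matrix
\[
\Big(\tfrac{t^{pq}-1}{t^{q}-1},\ -\tfrac{t^{pq}-1}{t^{p}-1}\Big).
\]
For a deficiency-one (here two-generator, one-relator) presentation of the knot group, the Alexander polynomial equals, up to a unit $\pm t^{k}$, the gcd of the maximal minors of the Fox Jacobian --- here simply the gcd of the two entries (this is the generator of the first elementary ideal of the module $H_1$ of the infinite cyclic cover relative to a point). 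To evaluate that gcd I factor into cyclotomic polynomials via $t^{n}-1=\prod_{d\mid n}\Phi_{d}(t)$: this yields $\tfrac{t^{pq}-1}{t^{q}-1}=\prod_{d\mid pq,\ d\nmid q}\Phi_{d}$ and $\tfrac{t^{pq}-1}{t^{p}-1}=\prod_{d\mid pq,\ d\nmid p}\Phi_{d}$, both squarefree, hence $\gcd=\prod_{d\mid pq,\ d\nmid p,\ d\nmid q}\Phi_{d}(t)$. The same bookkeeping on the claimed expression shows that the multiplicity of $\Phi_{d}$ in $\tfrac{(1-t^{pq})(1-t)}{(1-t^{p})(1-t^{q})}$ is $[\,d\mid pq\,]+[\,d=1\,]-[\,d\mid p\,]-[\,d\mid q\,]$, which (using $\gcd(p,q)=1$) equals $1$ precisely when $d\mid pq$ with $d\nmid p$ and $d\nmid q$, and $0$ otherwise; in particular the $d=1$ contributions cancel, so the rational function has no pole at $t=1$ and is a genuine polynomial, equal to the gcd computed above.

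The one point that needs care is the normalization. Fox calculus only pins down $\Delta$ up to $\pm t^{k}$, whereas the stated formula is the symmetric representative with $\Delta(1)=1$ --- one checks $\lim_{t\to1}\tfrac{(1-t^{pq})(1-t)}{(1-t^{p})(1-t^{q})}=q\cdot\tfrac1q=1$ --- but since, after removing the removable singularity at $t=1$, both sides are literally the monic product of the same cyclotomic polynomials, they coincide on the nose. A more geometric route that sidesteps the elementary-ideal computation is also available: $T(p,q)$ is fibered with fiber the Milnor fiber of $z_{1}^{p}+z_{2}^{q}$, whose monodromy acts on $H_{1}$ with eigenvalues $\{\zeta_{p}^{i}\zeta_{q}^{j}:1\le i\le p-1,\ 1\le j\le q-1\}$, so $\Delta(t)=\prod_{i,j}(t-\zeta_{p}^{i}\zeta_{q}^{j})$; an inclusion-exclusion over the full index rectangle $[0,p-1]\times[0,q-1]$, together with $\prod_{i=0}^{p-1}(t-\zeta_{p}^{i}w)=t^{p}-w^{p}$ and $\gcd(p,q)=1$, collapses this product to $\tfrac{(t^{pq}-1)(t-1)}{(t^{p}-1)(t^{q}-1)}$. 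Either way I expect the genuinely delicate part to be only the cyclotomic/normalization bookkeeping; the structural inputs (the group presentation, or the Milnor fibration) are entirely standard.
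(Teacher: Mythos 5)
The paper does not prove this statement at all: it is quoted as ``a standard classical result'' with a pointer to \cite{rolfsen_2003}, so there is no in-paper argument to compare against. Your proof is correct, and your first route (the presentation $\langle x,y\mid x^p=y^q\rangle$, Fox calculus, gcd of the two entries of the $1\times 2$ Jacobian) is essentially the textbook proof one finds in the cited reference. The Fox derivatives, the abelianization $x\mapsto t^q$, $y\mapsto t^p$, and the cyclotomic bookkeeping identifying $\gcd\bigl(\tfrac{t^{pq}-1}{t^q-1},\tfrac{t^{pq}-1}{t^p-1}\bigr)=\prod_{d\mid pq,\ d\nmid p,\ d\nmid q}\Phi_d$ with the stated rational function all check out, and you correctly observe that the $\Phi_1$ factors cancel so the expression is a genuine polynomial; your normalization check $\Delta(1)=1$ matches the representative the paper actually uses (e.g.\ it reproduces $1-t+t^3-t^5+t^6$ for $T(3,4)$). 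Two small points worth tightening if this were written out in full: (i) the relation $p\bar x=q\bar y$ alone only gives $(\bar x,\bar y)=(qm,pm)$; you need that $x,y$ generate $H_1\cong\mathbb{Z}$ (equivalently the linking numbers $\mathrm{lk}(x,K)=q$, $\mathrm{lk}(y,K)=p$) together with $\gcd(p,q)=1$ to force $m=\pm1$; (ii) the ``gcd of maximal minors'' recipe should be stated as: the Fox Jacobian presents $H_1(\tilde X,\tilde p)$, and $E_1$ of that relative module equals $E_0$ of $H_1(\tilde X)$, whose gcd is $\Delta$ up to units. Your alternative via the Milnor fibration and the eigenvalues $\zeta_p^i\zeta_q^j$ is also correct and arguably cleaner, since the inclusion--exclusion over the index rectangle produces the formula directly in the symmetric form; either argument fully justifies the theorem as used in the paper.
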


\begin{thm}
\label{thmeuclid}
Let $p<q$ be coprime positive integers, then
\[\Upsilon_{T(p,q)}(t) = \Upsilon_{T(p, q-p)}(t) + \Upsilon_{T(p, p+1)}(t)\]
\end{thm}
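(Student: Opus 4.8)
The plan is to evaluate both sides directly from the staircase description of L-space knots and thereby reduce Theorem \ref{thmeuclid} to a combinatorial identity about the numerical semigroups $\langle p,q\rangle$ that is governed by the Euclidean step $q\mapsto q-p$. First I record the relevant shape of $\Upsilon$ for an L-space knot $K$: each white (Maslov grading $0$) vertex of the staircase $\cinf(K)$ is a cycle representing the generator of $H_0(\cinf(K))$, and every Maslov grading-$0$ element is a sum of white vertices; hence $\gamma_K(t)=\min_v\big(\tfrac t2\,\alex(v)+(1-\tfrac t2)\,\alg(v)\big)$, the minimum over white vertices $v$, so that any minimizing single white vertex realizes $\gamma_K(t)$. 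Since the coefficients $\tfrac t2$ and $1-\tfrac t2$ are nonnegative for $t\in[0,2]$, this minimum, and hence $\Upsilon_K$, depends only on the \emph{lower-left convex hull} $\mathrm{LLH}(K)$ of the white vertices: a convex, decreasing polygonal chain from $(0,g)$ to $(g,0)$ with $g=g(K)$. Consequently $\Upsilon_K$ is convex piecewise linear, $\Upsilon_K(0)=0$, and its distributional derivative is $d\Upsilon'_K=\sum_e (h_e+v_e)\,\delta_{2h_e/(h_e+v_e)}$, the sum over the edges $e=(h_e,v_e)$ of $\mathrm{LLH}(K)$, where $h_e$ and $v_e$ are the horizontal length and vertical drop of $e$.

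Next I reduce the theorem to a statement about these hulls. By Theorem \ref{thm cinf}(1) and the fact that the grading-$0$ generators of a tensor product of staircases are exactly the tensor products of grading-$0$ generators, the white-vertex set of $\cinf(T(p,q-p)\#T(p,p+1))$ is the Minkowski sum of the white-vertex sets of the two factors; passing to lower-left hulls, $\mathrm{LLH}(T(p,q-p)\#T(p,p+1))=\mathrm{LLH}(T(p,q-p))+\mathrm{LLH}(T(p,p+1))$, whose edge multiset is the union of the two edge multisets re-sorted by slope. Using additivity of $\Upsilon$ under connected sum together with the derivative formula above, Theorem \ref{thmeuclid} becomes the identity of measures $d\Upsilon'_{T(p,q)}=d\Upsilon'_{T(p,q-p)}+d\Upsilon'_{T(p,p+1)}$ on $(0,2)$, i.e.\ to: for each slope, the total horizontal length of the edges of $\mathrm{LLH}(T(p,q))$ of that slope equals the sum of the analogous quantities for $T(p,q-p)$ and for $T(p,p+1)$. (The necessary equality of grand totals is just $g(T(p,q))=g(T(p,q-p))+g(T(p,p+1))$, which holds.)

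The crux is to prove this slope-distribution identity. By Theorem \ref{thmAlexPoly}, $\Delta_{T(p,q)}(t)/(1-t)=\sum_{s\in\langle p,q\rangle}t^s$, so the exponents of $\Delta_{T(p,q)}=\sum(-1)^it^{a_i}$ --- hence the staircase $[a_1-a_0,a_2-a_1,\dots]$ and its white vertices --- are determined by which integers in $[0,2g]$ belong to $\langle p,q\rangle$. I would then give an explicit description of $\mathrm{LLH}(T(p,q))$ in terms of $\langle p,q\rangle$ and prove the identity by induction along $q\mapsto q-p$: organizing the elements of $\langle p,q\rangle\cap[0,2g]$ into blocks according to the value of $\lfloor s/p\rfloor$, one sees a regular growth of block sizes whose deviation from the corresponding data for $\langle p,q-p\rangle$ is exactly the block pattern of $\langle p,p+1\rangle$, and translating this back through the staircase construction inserts precisely the edges of $\mathrm{LLH}(T(p,p+1))$ into the hull while leaving the remaining slope distribution equal to that of $T(p,q-p)$.

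I expect this last combinatorial lemma to be the main obstacle, and it is delicate for a structural reason: the complexes themselves are \emph{not} additive in this way --- by Theorem \ref{thm2}, $\cinf(T(p,q))$ and $\cinf(T(p,q-p)\#T(p,p+1))$ need not even be stably equivalent --- so the argument must isolate exactly the lower-left convex hull of the white vertices, and nothing finer, and show that only this coarse invariant of $\langle p,q\rangle$ is additive under replacing $T(p,q)$ by $T(p,q-p)\#T(p,p+1)$. A related point to watch is that the staircase of $T(p,q)$ is generally not itself convex: it can carry white vertices lying strictly above $\mathrm{LLH}(T(p,q))$, which $\Upsilon$ never detects, so one must work with the hull rather than the raw staircase throughout.
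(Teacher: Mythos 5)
The paper does not prove this statement at all---it is quoted from Feller--Krcatovich---so there is no internal argument to compare against; your proposal is essentially the route the published proof takes (reduce $\Upsilon$ of an L-space knot to semigroup combinatorics, then analyze the Euclidean step $q\mapsto q-p$). Your reduction is sound: for a staircase every white vertex is a cycle representing the generator of $H_0$ and every grading-$0$ representative is an odd sum of them, so $\gamma_K(t)$ is the minimum of $\tfrac{t}{2}\alex+(1-\tfrac{t}{2})\alg$ over white vertices and sees only their lower-left hull; since $\Upsilon_K(t)=-2\alg+t(\alg-\alex)$ at the minimizer, the jump of $\Upsilon_K'$ at $t=2h/(h+v)$ is exactly $h+v$ for a hull edge of horizontal length $h$ and drop $v$; and additivity of $\Upsilon$ together with $g(T(p,q))=g(T(p,q-p))+g(T(p,p+1))$ reduces the theorem to matching slope-weighted edge distributions of the hulls. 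Your warning that the raw staircase need not be convex (e.g.\ $T(5,7)$ has white vertices strictly above its hull) is well taken. One small inaccuracy: the Maslov grading-$0$ generators of a tensor product of staircases are not only products of grading-$0$ generators (a grading-$1$ generator tensored with a $U$-translate of a grading-$1$ generator also lands in grading $0$), but this is harmless since your reduction only invokes additivity of $\Upsilon$, not the Minkowski-sum description.

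The genuine gap is that the combinatorial core---the claim that the edge multiset of $\mathrm{LLH}(T(p,q))$, counted by slope with multiplicity, is the disjoint union of those of $T(p,q-p)$ and $T(p,p+1)$---is only gestured at (``one sees a regular growth of block sizes whose deviation \dots is exactly the block pattern of $\langle p,p+1\rangle$''). That is where all of the content of the theorem lives, and it is delicate for precisely the reason you identify: the staircases themselves do not decompose (by Theorem \ref{thm2} the complexes are not even stably equivalent), only the hulls do, so the induction must explain why every white vertex $P_i=(\alpha(i),\alpha(i)-s_i)$ of $\langle p,q\rangle$ that fails to match the decomposition lies strictly above the hull. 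To close the gap you would need an explicit criterion for which $P_i$ lie on $\mathrm{LLH}(T(p,q))$---equivalently, for which $m$ the counting function $\#(\langle p,q\rangle\cap[0,m))$ meets its lower convex envelope---and a verified correspondence between those hull vertices and pairs of hull vertices for $\langle p,q-p\rangle$ and $\langle p,p+1\rangle$. As written, the proposal is a plausible and correctly framed plan, but not a proof.
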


\section{Calculations of secondary Upsilon invariant}

In this section we calculate $\uu$ for several torus knots and use these calculations to prove Theorem \ref{thm1} and \ref{thm2}.

To simplify our notation, we will use the following shorthand:

\begin{defn}
We will omit the subscript $t$ if $t = s$ in the definition of $\uu$. That is:
\[\uu_K(s) \coloneqq \uu_{K, s}(s) \quad \forall s \in (0, 2)\]
\end{defn}

We will calculate the Alexander polynomial for torus knots using the following algorithm:

For any given $K = T(p,q)$, let $S = \{ap+bq|a,b\in\mathbb{N}\}$. We can write $S$ in the following form:
\[S = \cup_{i=1}^n \{s_i, s_i+1, \dots, e_i\} \cup \{s_{n+1}, s_{n+1}+1, \dots\}\]
with $s_i, e_i$ integers satisfying $s_i \le e_i \le s_{i+1}-2$. Then we have
\[\Delta_K(t) = \sum_{i=0}^n (t^{s_i} - t^{e_i}) + t^{s_{n+1}}\]
To see why this is true, simply note that $\frac{1}{1-t^p} = \sum_{i=0}^\infty t^{pi}$ and use Theorem \ref{thmAlexPoly}. And consequently the steps of the staircase are
\[[e_1-s_1+1, s_2-e_1-1, e_2-s_2+1, s_3-e_2-1, \dots, e_n-s_n+1, s_{n+1}-e_n-1]\]
So the coordinate of the $i$-th "white dot" (representing grading-0 basis element) \emph{relative to the starting dot of the staircase} is given by
$P_i = (\alpha(i), \alpha(i) - s_i)$, where we define 
$\alpha(i) \coloneqq \sum_{j=1}^{i-1}(e_j-s_j+1) = |S \cap [0, s_i)|$.

\begin{rmk}
Note that if we shift the algebraic and Alexander filtration levels by $(a,b)$, then both $\gamma_K(s)$ and $\gamma^2_{K,t}(s)$ will be shifted by
$(1-\frac{2}{s})a+\frac{2}{s}b$, thus $\uu_K(s)$ is unchanged. So for notation simplicity, from now on when working with a staircase diagram, all the coordinates will be relative to the first dot of the staircase. In other words, we shift the filtration levels so that the first dot of the staircase has coordinate $(0,0)$.
\end{rmk}

We are now ready for some $\uu$ calculations:

\begin{prop}
\label{prop1}
For torus knot $T(p,q)$ with $p<q$ positive coprime integers, $s = \frac{2}{p}$ is a jump-value with $\uu_{T(p,q)}(s) = -\frac{2(p-1)}{p}$. There's no jump-value in $(0,\frac{2}{p})$. 
\end{prop}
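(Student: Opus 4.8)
The plan is to work directly with the staircase description of $\cinf(T(p,q))$ provided by the Alexander-polynomial algorithm, and to locate the first jump-value by tracking how the support line $\mathcal{L}_{s,\gamma(s)}$ pivots as $s$ increases from $0$. First I would set up coordinates: writing $S=\{ap+bq\}$, the first few gaps of $S$ are completely forced — since $p<q$, the set $S$ begins $\{0,p,2p,\dots\}$ up to the first place where a multiple of $q$ (or a sum) fills in, so $s_1=0$, $e_1=0$, $s_2=p$, and the first two white dots of the staircase sit at $P_1=(0,0)$ and $P_2=(1,1-p)$ after relabeling $e_j-s_j+1$, i.e. the first horizontal step is $1$ and the first vertical step is $p-1$. (I'd double-check the exact step lengths from $\Delta_{T(p,q)}$, but the key point is that the first vertical drop is $p-1$ and the first horizontal run is $1$.) This is the ``top-left corner'' of the staircase, and it is the only part of the complex relevant for small $s$.

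Next I would compute $\gamma_{T(p,q)}(s)$ for $s$ near $0$. For very small $s$ the slope $m=1-2/s$ is a large negative number, so the support line is nearly vertical and the minimal-$s$ cycle representing the generator of $H_0$ is the single leftmost white dot $P_1=(0,0)$; hence $\gamma(s)=0$ and $\Upsilon_{T(p,q)}(s)=0$ for $s$ in an initial interval, with pivot point $P_1$. As $s$ increases, the line through $P_1$ with slope $1-2/s$ rotates, and the first competing lattice point it can swing down to is the next white dot $P_2$, which lies at algebraic level $1$ and Alexander level (relative) $-(p-1)$. The two pivot points $P_1$ and $P_2$ become simultaneously on the support line precisely when the $\mathcal{F}^s$-values of $(0,0)$ and $(1,-(p-1))$ agree, i.e. when
\[
\tfrac{s}{2}\cdot 0 + (1-\tfrac{s}{2})\cdot 0 \;=\; \tfrac{s}{2}\cdot(-(p-1)) + (1-\tfrac{s}{2})\cdot 1,
\]
which solves to $s = 2/p$. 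For $s<2/p$ the pivot stays at $P_1$ (no other lattice point can be reached, since $P_2$ is the nearest and all further dots are even farther down-left in $\mathcal{F}^s$-value), so there is no jump-value in $(0,2/p)$; at $s=2/p$ the pivots $p_s^-=P_1$ and $p_s^+=P_2$ genuinely differ, so $s=2/p$ is a jump-value. The black dot bridging $P_1$ and $P_2$ — the intermediate corner of the staircase at $(1,0)$ relative to $P_1$ — is the cycle-connecting generator that determines $\gamma^2_{T(p,q),s}(s)$.

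Finally I would compute the value. At $s=2/p$, $\gamma(s)=0$ (the $\mathcal F^s$-value of $P_1$). For $\gamma^2$, the relevant sum $z^-+z^+$ is bounded by the black dot at relative coordinate $(1,0)$, whose $\mathcal F^s$-level at $s=2/p$ is $(1-\tfrac1p)\cdot 1 + \tfrac1p\cdot 0 = 1-\tfrac1p$. Hence $\gamma^2_{T(p,q),s}(s) = \tfrac{p-1}{p}$ and
\[
\uu_{T(p,q)}(s) = -2\bigl(\gamma^2_{T(p,q),s}(s) - \gamma_{T(p,q)}(s)\bigr) = -2\cdot\tfrac{p-1}{p} = -\tfrac{2(p-1)}{p},
\]
as claimed. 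The main obstacle I anticipate is not this final arithmetic but the careful verification that (a) the first two steps of the staircase really are $[1, p-1, \dots]$ for \emph{every} coprime $q>p$, which requires knowing that $\{0\}$ and $\{p\}$ are isolated in $S$, i.e. that $1\notin S$ and $p+1\notin S$ (true since the smallest nonzero element of $S$ other than $p$ is $\min(2p,q)\ge p+1$, with equality forcing the relative vertical step, but one must confirm $q\neq p+1$ is handled or that $q=p+1$ still gives vertical step $\ge 1$), and (b) that no dot other than $P_1$ enters the half-plane for $s<2/p$, i.e. that $P_2$ is genuinely the $\mathcal F^s$-closest competitor — this follows because every later white dot has strictly larger algebraic level and the staircase shape forces its $\mathcal F^s$-value (for $s$ small) to exceed that of $P_2$, but writing this cleanly is the delicate part.
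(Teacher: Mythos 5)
Your overall strategy is the same as the paper's: read off the top of the staircase, rotate the support line until it first leaves $P_1=(0,0)$, and measure the height of the connecting black dot(s). Two points need repair. First, your identification of the pivots at $s=2/p$ is wrong whenever $q>2p$: at that value of $s$ the support line has slope $-(p-1)$ and passes through \emph{all} of the first $l+1$ white dots $(i,(1-p)i)$, $0\le i\le l$, where $l=\lfloor q/p\rfloor$, so $\mathcal{Z}^+=\{P_{l+1}\}$, not $\{P_2\}$. You still land on the right number only because the $l$ black dots joining $P_1$ to $P_{l+1}$ all sit on the single line $y=-(p-1)x+(p-1)$ and hence share the filtration level $\frac{p-1}{p}$; as written, your computation of $\gamma^2$ uses the wrong connecting chain, and this coincidence goes unexplained. (Try $T(3,7)$: the white dots $(0,0)$, $(1,-2)$, $(2,-4)$ are simultaneously on the support line at $s=2/3$.)

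Second, and more seriously, the step you defer as ``the delicate part'' is in fact the entire content of the proof: one must show that every white dot other than the first $l+1$ lies \emph{strictly above} the line $L\colon y=-(p-1)x$. Without this you have established neither that $\gamma(2/p)=0$, nor that $P_1$ is the unique pivot for $s<2/p$ (hence no jump-value there), nor that $\mathcal{Z}^\pm$ are what you claim at $s=2/p$. Your heuristic that ``larger algebraic level forces larger $\mathcal{F}^s$-value for $s$ small'' is not sufficient at $s=2/p$ itself, where the Alexander coordinate carries weight $1/p$ and could in principle compensate. The paper closes this with a short counting bound: for a white dot $P_i=(\alpha(i),\alpha(i)-s_i)$ with $s_i>q$, the semigroup $S$ already contains $\{0,p,\dots,(\lceil s_i/p\rceil-1)p\}\sqcup\{q\}$ below $s_i$, so $\alpha(i)=|S\cap[0,s_i)|\ge\lceil s_i/p\rceil+1>s_i/p$, which is exactly the condition for $P_i$ to lie strictly above $L$; the case $s_i=q$ is checked directly. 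Adding this estimate turns your outline into a complete proof.
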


\begin{proof}
We follow the procedures above. Let $l = \lfloor \frac{q}{p} \rfloor$, then
\[S = \{0, p, \dots, lp, q, (l+1)p, \dots]\}\]
We claim that every white dot will lie on or above the line $L: y = -(p-1)x$, and only the first $l+1$ points lies on the line. Please note that we are using the shifted version of filtrations so that the first dot has coordinates $(0,0)$.

Indeed for $1\le i \le l+1$ the $i$-th dot has coordinate $(i, (1-p)i)$. For $i > l+1$, the coordinates are $(\alpha(i), \alpha(i) - s_i)$, and it is above $L$ iff $s_i < p\alpha(i)$. If $s_i = q$, then $P_i = (l+1, l+1-q)$ and is clearly above $L$. If $s_i > q$, $S$ contains 
$\{0, p, 2p, \dots, (\lceil\frac{s_i}{p}\rceil-1) p\} \sqcup \{q\}$, we have
\[\alpha(i) \ge \lceil\frac{s_i}{p}\rceil + 1 > \frac{s_i}{p}\]
Thus $P_i$ lies above $L$ for $i > l+1$. So for $s = \frac{2}{p}$ we see that $\mathcal{Z}^+ = \{P_{l+1}\}$ and $\mathcal{Z}^- = \{P_1\}$. They are connected by the first $l$ black-dots, each lying on the line $y = -(p-1)x + p-1$. So $s$ is a jump value and 
\[\uu_{T(p,q)}(s) = -s  (p-1) = -\frac{2(p-1)}{p}\]

Finally, it's clear that for $s\in(0,\frac{2}{p})$, $\mathcal{Z}^\pm$ are both $\{P_0\}$ and thus there's no jump-value in $(0,2)$.
\end{proof}

\begin{prop}
\label{prop2}
For torus knot $K = T(p, p+1)$ where $p\ge 3$, there is no jump-value in $(\frac{2}{p}, \frac{4}{p})$. $s = \frac{4}{p}$ is a jump value with $\uu_K(s) = -\frac{4(p-2)}{p}$
\end{prop}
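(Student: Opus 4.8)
The plan is to apply the algorithm from the beginning of this section to the specific torus knot $T(p,p+1)$, extract the staircase, and locate the pivot points on the relevant support lines. Since $p < p+1$, we have $l = \lfloor (p+1)/p \rfloor = 1$, so the numerical semigroup is
\[ S = \{0, p, p+1, 2p, 2p+1, 2p+2, \dots\} = \{0\} \cup \{p, p+1\} \cup \{2p, 2p+1, 2p+2\} \cup \cdots, \]
which is completely explicit: after the initial gap, the $k$-th block is $\{kp, kp+1, \dots, kp+(k-1)\}$ for $k \ge 1$, ending at $e_k = kp+k-1$ and with the next block starting at $s_{k+1} = (k+1)p$. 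From this I can read off the white-dot coordinates $P_i = (\alpha(i), \alpha(i)-s_i)$ explicitly: $P_0 = (0,0)$, $P_1 = (1, 1-p)$, $P_2 = (2, 2-p)$ (from $s_2 = p+1$), $P_3 = (3, 3-2p)$ (from $s_3 = 2p$), and so on.

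Next I would identify the support line for $s$ near $4/p$. For $s = 4/p$ the support line $\mathcal{L}_{s,\gamma(s)}$ has slope $1 - 2/s = 1 - p/2$. Following the pattern of Proposition \ref{prop1}, I expect the key line to be the one through $P_2 = (2, 2-p)$ and $P_3 = (3, 3-2p)$, which indeed has slope $(3-2p)-(2-p) = 1-p$... wait, that does not match; the correct line must have slope $1-p/2$, so I should instead check which pair of white dots actually lies on a line of that slope. The line of slope $1-p/2$ through $P_2$ passes through $(2 + 2k, 2-p + k(1-p/2)) = (2+2k, 2 - p + k - kp/2)$; for this to hit a white dot $P_i$ I need the arithmetic to work out, and the natural candidate is $P_{l'}$ for the first block index where the white dots stop being on this line — by analogy with Prop.\ \ref{prop1} this should be $P_{2}$ as the negative pivot and the white dot two blocks later as the positive pivot, connected by two black dots. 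I would verify that every white dot lies on or above this line $L'$, that exactly the dots with indices in a specific range (those corresponding to $s_i \in \{p+1, \dots\}$ up through some bound) lie \emph{on} $L'$, and that these are joined by black dots lying on the parallel line $L' + (\text{vertical shift})$; the vertical offset between the two parallel lines gives $\gamma^2 - \gamma$ and hence $\uu_K(s)$.

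The computation that there is no jump-value in $(2/p, 4/p)$ is the cleaner half: for $s$ strictly between $2/p$ and $4/p$ the slope $1-2/s$ lies strictly between $1-p$ and $1-p/2$, and I would argue that on such a line exactly one white dot is extremal (namely $P_1$, or a transitional dot), so $\mathcal{Z}^+ = \mathcal{Z}^-$ and $\gamma^2 = -\infty$. This amounts to checking a strict inequality $s_i < (\text{slope-dependent bound}) \cdot \alpha(i)$ for all $i$ except a single index, exactly as in the proof of Proposition \ref{prop1}, using the growth estimate $\alpha(i) \ge \lceil s_i/p \rceil + (\text{number of non-multiples-of-}p\text{ below }s_i)$ which is available from the explicit block structure.

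The main obstacle I anticipate is pinning down exactly which two white dots serve as the pivot points $p_s^{\pm}$ at $s = 4/p$ and confirming that the chain of black dots connecting them all lie on a single line parallel to the support line — i.e., that the arc of the staircase between the pivots is "straight" at the resolution the slope $1-p/2$ detects. This is where the specific number-theoretic structure of $S$ for $T(p,p+1)$ (blocks of lengths $1, 2, 3, \dots$ separated by single gaps) has to be used carefully rather than estimated, because the value $\uu_K(4/p) = -4(p-2)/p$ is a precise number and any off-by-one in the block bookkeeping changes it. Once the correct pivots are identified, the final formula follows by computing the $\mathcal{F}^s$-filtration difference between the connecting black dot(s) and the pivot, multiplying by $-2$, exactly as in Proposition \ref{prop1}.
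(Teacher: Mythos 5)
There is a genuine gap: you never correctly pin down the staircase of $\cinf(T(p,p+1))$, and as a result the pivot points and the connecting black dot --- which are the entire content of the proposition --- are left unidentified. The white dots sit at the \emph{starts} of the blocks of $S$, i.e.\ at $s=0,p,2p,3p,\dots$, giving coordinates $(0,0)$, $(1,1-p)$, $(3,3-2p)$, $(6,6-3p)$, and in general $\bigl(\tfrac{k(k+1)}{2},\tfrac{k(k+1)}{2}-kp\bigr)$. The element $p+1$ is the \emph{end} of the block $\{p,p+1\}$, not the start of a new one, so your extra white dot at $(2,2-p)$ does not exist (and even under your reading, $s_2=p+1$ would give $(2,1-p)$, not $(2,2-p)$; note also that your general block should be $\{kp,\dots,kp+k\}$, not $\{kp,\dots,kp+(k-1)\}$, which contradicts your own explicit listing). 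After you notice that the line through your $(2,2-p)$ and $(3,3-2p)$ has the wrong slope, the proposal retreats to ``check which pair of white dots actually lies on a line of that slope'' and guesses that the pivots are two blocks apart, joined by two black dots; that guess is also wrong for this knot (it describes the picture for $T(p,p+k)$ with $k\ge 2$, not $T(p,p+1)$).

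The correct picture is that at $s=\frac{4}{p}$ the support line of slope $-\frac{p-2}{2}$ passes through the \emph{consecutive} white dots $(1,1-p)$ and $(3,3-2p)$, which are joined by the single black dot $(3,1-p)$; one must then verify that $(0,0)$ and every white dot $\bigl(\alpha(i),\alpha(i)-s_i\bigr)$ with $s_i\ge 3p$ lies strictly above this line, which the paper does via the counting estimate $\alpha(i)>\frac{2s_i}{p}-1$. The filtration gap between $(3,1-p)$ and the support line then yields $\uu_K\bigl(\frac{4}{p}\bigr)=-\frac{4(p-2)}{p}$, and the fact that $(1,1-p)$ is the unique minimizer for all slopes strictly between $1-p$ and $1-\frac{p}{2}$ gives the no-jump statement on $\bigl(\frac{2}{p},\frac{4}{p}\bigr)$. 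Your outline of the no-jump half and of the final ``multiply the filtration difference by $-2$'' step is correct in spirit, but since the proposition's value $-\frac{4(p-2)}{p}$ depends on exactly the bookkeeping you defer, none of its three claims is actually established by the proposal as written.
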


\begin{proof}
\[S = \{0, p, p+1, 2p, 2p+1, 2p+2, 3p, \dots\}\]

\begin{figure}[ht]
\centering
\includegraphics[width = 8cm]{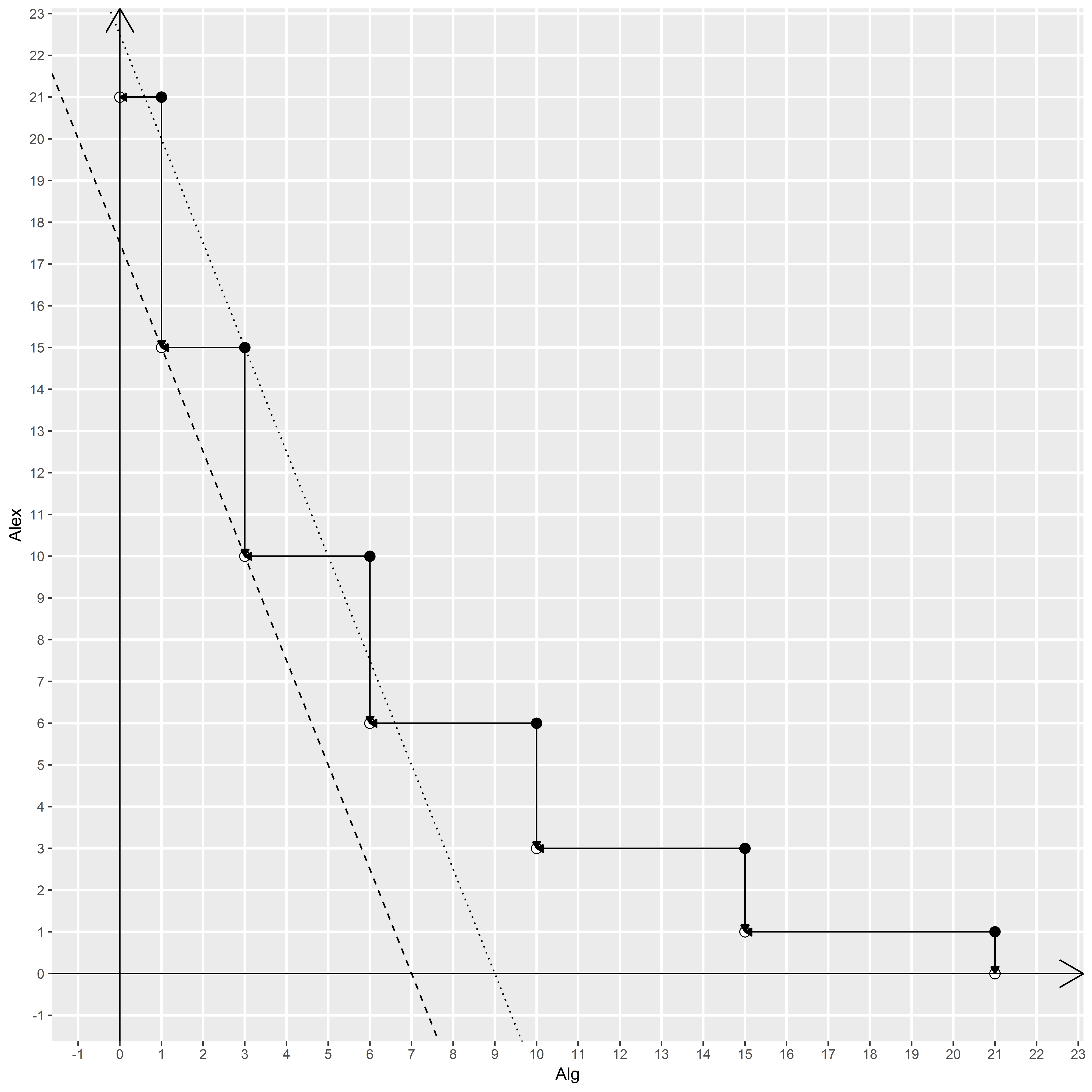}
\caption{Diagram for $\cinf(T(p,p+1))$ with $p=7$}
\label{figT78}
\end{figure}

See Figure \ref{figT78}. Let $L$ be the line passing $P_2 = (1, 1-p)$ and $P_3 = (3, 3-2p)$, which has slope $-\frac{p-2}{2}$. It's clear that $P_1$ lies above $L$. We claim that for any $i \ge 4$, $P_i = (\alpha(i), \alpha(i) - s_i)$ lies above $L$. Note that $s_i \ge 3p$.

This is equivalent to $\alpha(i) > \frac{2}{p}s_i - 1$. But note that by counting numbers of form $ap$, $ap+(p+1)$ and $ap+2(p+1)$ that are less than $s_i$, we have 
\[\alpha_i \ge \lceil \frac{s_i}{p}\rceil + \lceil \frac{s_i-(p+1)}{p}\rceil + \lceil \frac{s_i-2(p+1)}{p}\rceil > \frac{s_i}{p} + \frac{s_i-p-1}{p}+1 > \frac{2s_i}{p}-1\]

Hence for $s = \frac{4}{p}$, $L$ is indeed the support line $L_{s, \gamma(s)}$. We see that pivot points are $\mathcal{Z}^+ = \{P_{3}\}$ and $\mathcal{Z}^- = \{P_2\}$, and they are connected by the black-dot $(3, 1-p)$. Thus $\uu_K(s) = -\frac{4(p-2)}{p}$.

Finally, by the discussion above we see that for $s \in (\frac{2}{p}, \frac{4}{p})$ we always have $\mathcal{Z}^\pm = \{P_2\}$ and thus is a no-jump-value.
\end{proof}

\begin{prop}
\label{prop3}
For torus knot $K = T(p, p+k)$ with $2\le k < \frac{p}{2}$, there is no jump-value in $(\frac{2}{p}, \frac{4}{p})$. $s = \frac{4}{p}$ is a jump value with $\uu_K(s) = -\frac{4(p-k-1)}{p}$.
\end{prop}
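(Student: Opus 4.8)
The plan is to mimic the argument of Proposition \ref{prop2} almost verbatim, the only difference being the combinatorics of the semigroup $S = \{ap + b(p+k) \mid a,b \in \mathbb{N}\}$. First I would write down the initial segment of $S$ explicitly: since $0 < k < p/2$, the smallest nonzero elements are $p, p+k, 2p, 2p+k, 2p+2k, 3p, \dots$, so the first few white dots (relative to the starting dot at $(0,0)$) are $P_1 = (0,0)$, $P_2 = (1, 1-p)$, $P_3 = (3, 3-2p)$, with $P_3$ coming from $s_3 = 2p$ after the gap $\{2p+1,\dots\}\setminus S$ — here I need $2p+k \le 3p-2$, i.e. $k \le p-2$, which holds, and I need $s_2 = p$, $e_2 = p+k$ to be a genuine block, i.e. $p+k+2 \le 2p$, i.e. $k \le p-2$ again. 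So $\alpha(2) = 1$, $\alpha(3) = 3$, and $P_3 = (3, 3 - 2p)$ exactly as in Proposition \ref{prop2}. The line $L$ through $P_2$ and $P_3$ again has slope $-(p-2)/2$, which is precisely the slope of the support line at $s = \tfrac{4}{p}$ (slope $1 - 2/s = 1 - p/2$).

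Next I would verify the two inequalities. That $P_1$ lies strictly above $L$ is immediate since $L$ passes through $(1,1-p)$ with negative slope and $P_1 = (0,0)$. The main step is to show $P_i$ lies strictly above $L$ for all $i \ge 4$, equivalently $\alpha(i) > \tfrac{2}{p}s_i - 1$. Since $i \ge 4$ we have $s_i \ge 3p$, and by counting the elements of $S$ of the forms $ap$, $ap + (p+k)$, $ap + 2(p+k)$ that are $< s_i$ — which are pairwise distinct because $k < p/2$ forces $2(p+k) < 3p$ so these residues do not collide for small coefficients — I get
\[
\alpha(i) \ \ge\ \Big\lceil \tfrac{s_i}{p}\Big\rceil + \Big\lceil \tfrac{s_i - (p+k)}{p}\Big\rceil + \Big\lceil \tfrac{s_i - 2(p+k)}{p}\Big\rceil \ >\ \tfrac{s_i}{p} + \tfrac{s_i - p - k}{p} + 1 \ >\ \tfrac{2 s_i}{p} - 1,
\]
where the last inequality uses $s_i \ge 3p$ (so that the dropped third term $\tfrac{s_i - 2p - 2k}{p}$ is at least offsetting, and more concretely $\tfrac{s_i - p - k}{p} + 1 \ge \tfrac{s_i}{p} - 1$ since $k \le p$, with strictness coming from the ceilings). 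I would double-check the edge count once more to be safe, but this is the same estimate as before with $p+1$ replaced by $p+k$.

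Finally, with $L$ confirmed as the support line $\mathcal{L}_{s,\gamma(s)}$ at $s = \tfrac{4}{p}$, the positive and negative pivot points are $\mathcal{Z}^+ = \{P_3\}$ and $\mathcal{Z}^- = \{P_2\}$, joined by the black dot at $(3, 1-p)$ — the corner of the staircase between the horizontal step of length $\alpha(3)-\alpha(2) = 2$ and... wait, I should instead identify it as the black dot sitting directly above $P_3$ and to the right of $P_2$, at $(3, 1-p)$, which lies on the line $y = -\tfrac{p-2}{2}x + \tfrac{p-2}{2}\cdot\text{(something)}$; computing $\gamma^2_{K,s}(s)$ from its $\mathcal{F}^s$-level and subtracting $\gamma_K(s)$ gives $\uu_K(s) = -\tfrac{4(p-k-1)}{p}$. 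And for $s \in (\tfrac{2}{p}, \tfrac{4}{p})$ the support line has slope steeper than $-(p-2)/2$... gentler, rather (slope $1 - 2/s$ with $s < 4/p$ is more negative), so it pivots on $P_2$ alone, giving $\mathcal{Z}^\pm = \{P_2\}$ and no jump-value there. The main obstacle I anticipate is bookkeeping: making sure that $2\le k < p/2$ (rather than the weaker $k \le p-2$ of Theorem \ref{thm2}) is exactly what is needed for the residues $0, k, 2k$ mod $p$ to stay distinct and for $P_3$ to land at $(3, 3-2p)$; for $k$ in the range $[p/2, p-2]$ the semigroup looks different and this particular line is no longer the support line, which is presumably why a separate proposition will be needed.
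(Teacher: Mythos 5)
There is a genuine error in your combinatorics of the semigroup, and it propagates to the key computation. For $k\ge 2$ the integers $p+1,\dots,p+k-1$ do \emph{not} lie in $S=\{ap+b(p+k)\}$, so $\{p,\dots,p+k\}$ is not a single block $[s_2,e_2]$ of consecutive integers as you assert; rather $p$ and $p+k$ are two separate isolated elements of $S$. Hence the staircase has an extra white dot $P_3=(2,\,2-p-k)$ sitting between $P_2=(1,\,1-p)$ and the dot $(3,\,3-2p)$ (which is therefore $P_4$, not $P_3$), together with two black dots at $(2,\,1-p)$ and $(3,\,2-p-k)$. The dot $P_3$ does lie strictly above the line $L$ through $P_2$ and $P_4$ precisely because $k<p/2$, so the pivot sets are still $\mathcal{Z}^-=\{P_2\}$, $\mathcal{Z}^+=\{P_4\}$; but the chain joining them is the sum of the \emph{two} black dots $(2,1-p)$ and $(3,2-p-k)$, not a single black dot at $(3,1-p)$ --- that point is not a generator of this complex at all.

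This matters because the value of $\gamma^2$ is governed by the larger of the two filtration levels, which (since $k<p/2$) is that of $(3,\,2-p-k)$, namely $1-\tfrac{2k+2}{p}$; this yields $\uu_K(4/p)=-\tfrac{4(p-k-1)}{p}$. Had your picture been correct, the black dot $(3,1-p)$ would give filtration level $1-\tfrac{4}{p}$ and hence $\uu_K(4/p)=-\tfrac{4(p-2)}{p}$, with no $k$-dependence --- contradicting the statement you are trying to prove. So the asserted final answer cannot come out of the chain you identify; the $k$-dependence enters exactly through the white dot at $p+k$ that your block decomposition erased. (A smaller omission: your ``$P_i$ above $L$ for $i\ge 4$'' estimate starts at $s_i\ge 3p$, so the dots corresponding to $p+k$, $2p+k$, $2p+2k$ also need to be checked directly; they do lie above $L$, again using $k<p/2$, but this must be said.)
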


\begin{proof}
\[S = \{0, p, p+k, 2p, 2p+k, 2p+2k, 3p, \dots\}\]

\begin{figure}[ht]
\centering
\includegraphics[width = 8cm]{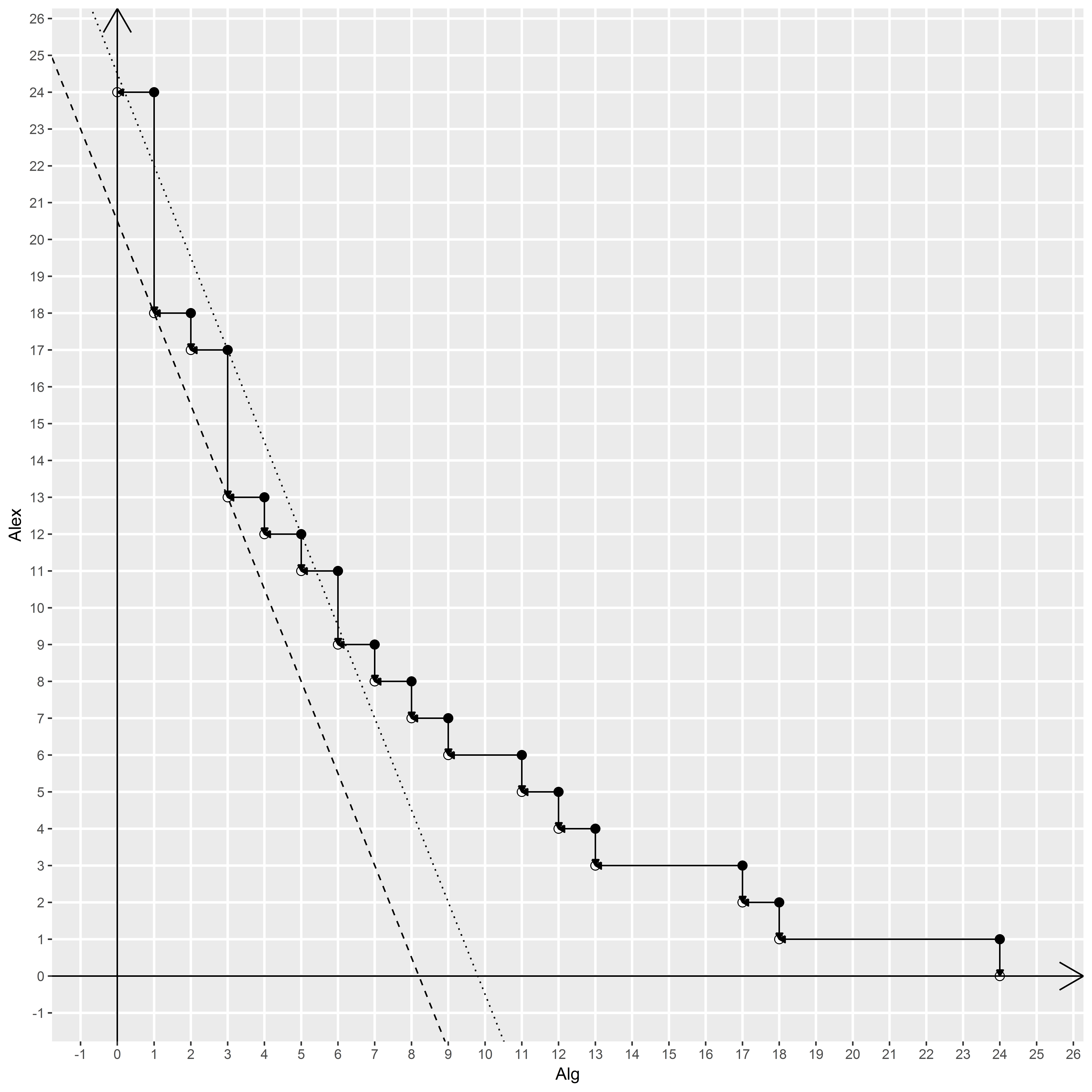}
\caption{Diagram for $\cinf(T(p,p+k))$ with $p=7$, $k=2$}
\label{figT79}
\end{figure}

See Figure \ref{figT79}. Let $L$ be the line passing $P_2 = (1, 1-p)$ and $P_4 = (3, 3-2p)$, which has slope $-\frac{p-2}{2}$. Direct calculation shows that $P_1$, $P_3$, $P_5$ and $P_6$ lie above $L$. We claim that for any $i \ge 7$, $P_i = (\alpha(i), \alpha(i) - s_i)$ also lies above $L$. Note that $s_i > 2p+2k$.

This is equivalent to $\alpha(i) > \frac{2}{p}s_i - 1$. And by a counting argument similar to the proof above, we have
\[\alpha_i \ge \lceil \frac{s_i}{p}\rceil + \lceil \frac{s_i-(p+k)}{p}\rceil + \lceil \frac{s_i-2(p+k)}{p}\rceil > \frac{s_i}{p} + \frac{s_i-p-k}{p}+1 > \frac{2s_i}{p}-1\]

Hence for $s = \frac{4}{p}$, $L$ is indeed the support line $L_{s, \gamma(s)}$. We see that pivot points are $\mathcal{Z}^+ = \{P_{4}\}$ and $\mathcal{Z}^- = \{P_2\}$, and they are connected by the black-dots $(2, 1-p)$ and $(3, 2-p-k)$. The latter determines the line $L_{s, \gamma^2_{K,s}(s)}$ and  $\uu_K(s) = -\frac{4(p-k-1)}{p}$.

Finally, by the discussion above we see that for $s \in (\frac{2}{p}, \frac{4}{p})$ we always have $\mathcal{Z}^\pm = \{P_2\}$, so there is no jump-value.

\end{proof}

\begin{prop}
\label{prop4}
For torus knot $K = T(p, p+k)$ with $\frac{p}{2}<k\le p-2$, $s = \frac{4}{p}$ is a jump value with $\uu_K(s) = -\frac{4(k-1)}{p}$. The only jump values in $(0, \frac{4}{p})$ are $\frac{2}{p}$ and $\frac{2}{k}$ 
\end{prop}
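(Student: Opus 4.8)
The plan is to run the procedure of Propositions~\ref{prop1}--\ref{prop3}: first determine the bottom of the set $S=\{ap+b(p+k):a,b\in\mathbb{N}\}$, then read off the first few white and black dots of the staircase, locate the support line at $s=\frac4p$, and finally extract the pivot points and $\gamma^2$.

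For the first step, note that since $2\le k\le p-2$ and $\gcd(p,k)=1$ none of $p+1,\,2p-1,\,2p+1,\dots$ lies in $S$, while $\frac p2<k$ forces $2(p+k)>3p$; hence $S=\{0,\,p,\,p+k,\,2p,\,2p+k,\,3p,\dots\}$ with the first five runs singletons. Feeding the $s_i$ and $\alpha(i)$ into $P_i=(\alpha(i),\alpha(i)-s_i)$ yields $P_1=(0,0)$, $P_2=(1,1-p)$, $P_3=(2,2-p-k)$, $P_4=(3,3-2p)$, $P_5=(4,4-2p-k)$, $P_6=(5,5-3p)$, with the two black dots joining $P_3,P_4,P_5$ located at $(3,2-p-k)$ and $(4,3-2p)$. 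Let $L$ be the line through $P_3$ and $P_5$; its slope is $-\frac{p-2}{2}$, which is $1-\frac2s$ at $s=\frac4p$.

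Next I would show $L$ is the support line $\mathcal{L}_{s,\gamma(s)}$ at $s=\frac4p$. Evaluating the functional $\frac2p\,\alex+\frac{p-2}{p}\,\alg$ at the dots above shows that $P_1,P_2,P_4,P_6$ all lie strictly above $L$ (here $2k>p$ is used), while for $i\ge 7$ the counting estimate from the proof of Proposition~\ref{prop3} — bounding $\alpha(i)$ from below by the number of elements of $S$ of the forms $ap$, $ap+(p+k)$, $ap+2(p+k)$ that are $<s_i$ — shows $P_i$ is strictly above $L$ as well; consequently $\gamma_K(\frac4p)=-\frac{2k}{p}$ (coordinates relative to the starting dot), with $P_3$ and $P_5$ the only minimizers. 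Because the derivative in $s$ of $\frac s2\alex+(1-\frac s2)\alg$ is more negative at $P_5$ than at $P_3$, the unique minimizer is $P_3$ for $s<\frac4p$ and $P_5$ for $s>\frac4p$, so $s=\frac4p$ is a jump value with $p_s^-=P_3$, $p_s^+=P_5$, hence $\mathcal{Z}^-=\{P_3\}$, $\mathcal{Z}^+=\{P_5\}$. In the staircase the only chain joining $P_3$ to $P_5$ is the sum of the two black dots above; their filtration levels in $\mathcal{F}^{4/p}$ are $\frac{p-2k-2}{p}$ and $-\frac2p$, and since $2k>p$ one has $\gamma_K(\frac4p)=-\frac{2k}{p}<\frac{p-2k-2}{p}<-\frac2p$. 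Thus neither black dot lies in $C^t_{\gamma_K(t)}$ for $t=\frac4p$, so this chain must be absorbed entirely by $C^t_r$, giving $\gamma^2_{K,4/p}(4/p)=-\frac2p$ and $\uu_K(\frac4p)=-2\bigl(-\frac2p+\frac{2k}{p}\bigr)=-\frac{4(k-1)}{p}$.

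For the jump values in $(0,\frac4p)$ I would use that $s\in(0,2)$ is a jump value exactly when $1-\frac2s$ is the slope of an edge of the lower-left convex hull of the white dots. By the above that hull begins $P_1\to P_2\to P_3\to P_5$ (the dot $P_4$ lying strictly above the edge $P_3P_5$), with successive slopes $1-p$, $1-k$, $-\frac{p-2}{2}$; since $\frac p2<k\le p-2$ these give $s=\frac2p$, $\frac2k$, $\frac4p$ with $\frac2p<\frac2k<\frac4p$. Together with Proposition~\ref{prop1} (no jump value in $(0,\frac2p)$) this shows the jump values in $(0,\frac4p)$ are precisely $\frac2p$ and $\frac2k$. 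The main obstacle is the counting argument that every white dot other than $P_3$ and $P_5$ lies strictly above $L$: it requires controlling $S$ carefully, and the low-index dots — notably $P_7$, together with the exact make-up of the sixth run (which depends on whether $2k=p+1$) — may need hands-on verification rather than the uniform estimate; the remaining bookkeeping is parallel to Propositions~\ref{prop1}--\ref{prop3}.
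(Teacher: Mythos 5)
Your proposal is correct and follows essentially the same route as the paper: the same support line through $P_3=(2,2-p-k)$ and $P_5=(4,4-2p-k)$, the same ceiling-function count for the high-index dots, the same pair of black dots $(3,2-p-k)$ and $(4,3-2p)$ with the second one determining $\gamma^2$, and the same convex-hull identification of $\frac{2}{p}$ and $\frac{2}{k}$ as the only other jump values. The caveat you flag about $P_7$ is real but harmless and is handled in the paper exactly as you suggest: $P_7$ is checked by direct calculation (the uniform estimate needs $s_i>2p+2k$ and so only starts at $i\ge 8$).
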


\begin{proof}
\[S = \{0, p, p+k, 2p, 2p+k, 3p, 2p+2k, 3p+k, , \dots\}\]

\begin{figure}[ht]
\centering
\includegraphics[width = 8cm]{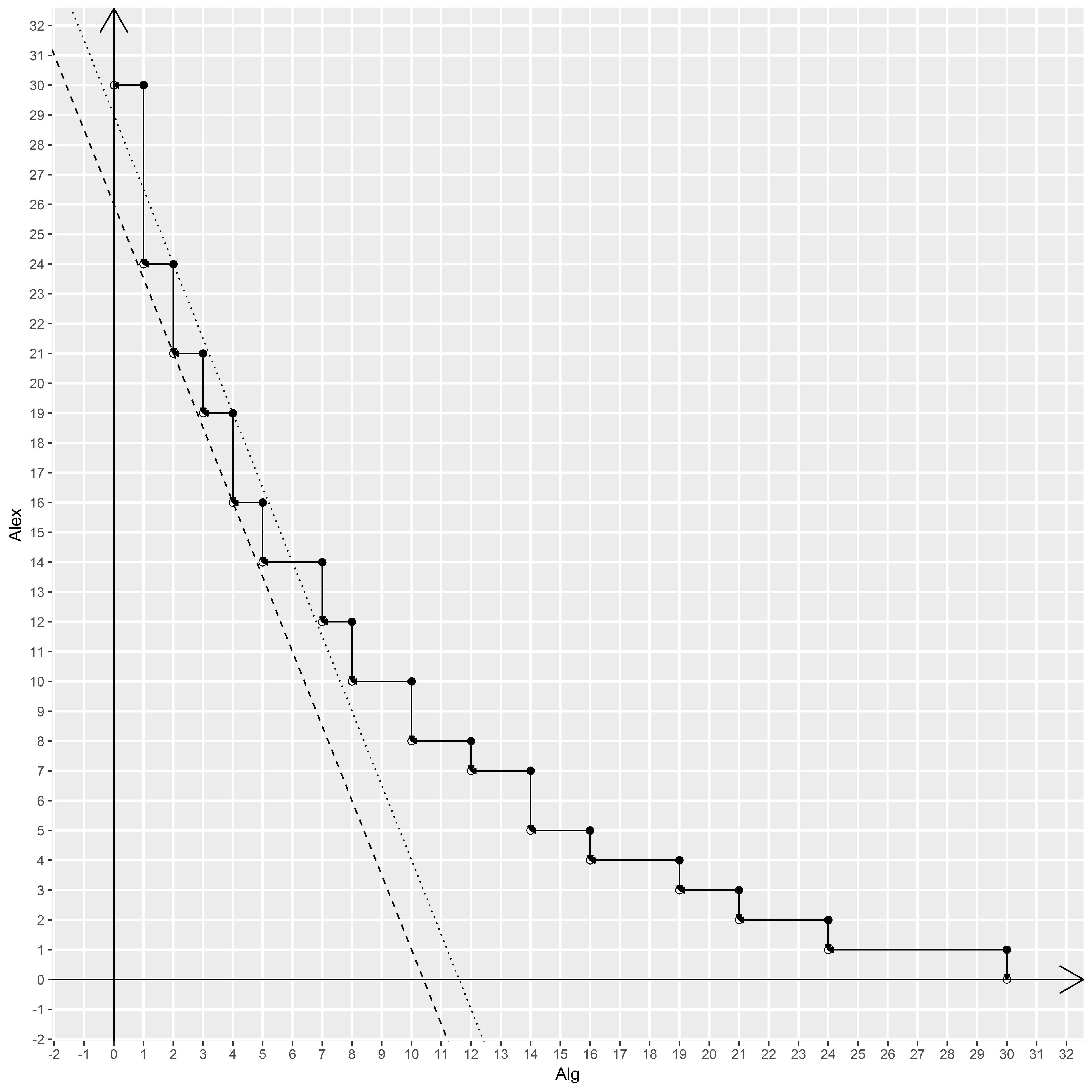}
\caption{Diagram for $\cinf(T(p,p+k))$ with $p=7$, $k=4$}
\label{figT711}
\end{figure}

See Figure \ref{figT711}. Let $L$ be the line passing $P_3 = (2, 2-p-k)$ and $P_5 = (4, 4-2p-k)$, which has slope $-\frac{p-2}{2}$. Direct calculation shows that $P_1$, $P_2$, $P_4$, $P_6$ and $P_7$ lie above $L$. We claim that for any $i \ge 8$, $P_i = (\alpha(i), \alpha(i) - s_i)$ also lies above $L$. Note that $s_i > 2p+2k$.

This is equivalent to $\alpha(i) > \frac{2}{p}(s_i - k)$. And again by counting argument we have
\[\alpha_i \ge \lceil \frac{s_i}{p}\rceil + \lceil \frac{s_i-(p+k)}{p}\rceil + \lceil \frac{s_i-2(p+k)}{p}\rceil > \frac{s_i}{p} + \frac{s_i-p-k}{p}+1 > \frac{2s_i-2k}{p}\]

Hence for $s = \frac{4}{p}$, $L$ is indeed the support line $L_{s, \gamma(s)}$. We see that pivot points are $\mathcal{Z}^+ = \{P_{5}\}$ and $\mathcal{Z}^- = \{P_3\}$, and they are connected by the black-dots $(3, 2-p-k)$ and $(4, 3-2p)$. The latter determines the line $L_{s, \gamma^2_{K,s}(s)}$ and  $\uu_K(s) = -\frac{4(k-1)}{p}$.

Finally, since $k>\frac{p}{2}$, all white dots except $P_2$ $P_3$ lie above the line $L'$ passing $P_2$ and $P_3$. This gives the only jump value in $(\frac{2}{p}, \frac{4}{p})$ which is $\frac{2}{k}$.

\end{proof}

\begin{prop}
\label{prop5}
For torus knot $T(p,q)$ with $p<q<2p$, $s = \frac{4}{q}$ is not a jump value.
\end{prop}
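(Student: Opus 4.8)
The plan is to avoid any staircase computation for $T(p,q)$ and instead read the statement off the Feller--Krcatovich identity together with Propositions \ref{prop1} and \ref{prop2}. The key reduction is this: if $\Upsilon'_{T(p,q)}$ is well-defined at a value $t$, then (as in the discussion following the definition of pivot points, and in Theorem \ref{uuthm}(2)) the support line $\mathcal{L}_{t,\gamma(t)}$ meets exactly one Maslov grading-$0$ generator, so that $p_t^+=p_t^-$, $\mathcal{Z}^+=\mathcal{Z}^-$, and $t$ is a no-jump-value. Hence it is enough to show that $\Upsilon_{T(p,q)}$ is affine on a neighbourhood of $s=\tfrac{4}{q}$.

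I would set $k=q-p$, so that $1\le k\le p-1$ and $\gcd(k,p)=1$, and invoke Theorem \ref{thmeuclid}:
\[\Upsilon_{T(p,q)}(t)=\Upsilon_{T(k,p)}(t)+\Upsilon_{T(p,p+1)}(t),\]
after reordering the indices of $T(p,q-p)$. Since a corner of a sum of piecewise-linear functions must be a corner of at least one of the two summands, it suffices to place $s=\tfrac{4}{q}$ in the interior of a linear piece of each summand. The only computation required is a short string of inequalities coming from $p<q<2p$: we get $\tfrac{2}{p}=\tfrac{4}{2p}<\tfrac{4}{q}<\tfrac{4}{p}$, and the inequality $4(q-p)<2q$ --- which is again just $q<2p$ --- gives $\tfrac{4}{q}<\tfrac{2}{q-p}=\tfrac{2}{k}$. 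So $s$ lies strictly inside both $(0,\tfrac{2}{k})$ and $(\tfrac{2}{p},\tfrac{4}{p})$.

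Now, if $k=1$ then $T(1,p)$ is the unknot and $\Upsilon_{T(1,p)}\equiv 0$ is trivially affine near $s$, while if $k\ge 2$ then Proposition \ref{prop1} applied to $T(k,p)$ shows $\Upsilon_{T(k,p)}$ has no jump value in $(0,\tfrac{2}{k})$, hence is affine on that interval. Likewise Propositions \ref{prop1} and \ref{prop2} show $\Upsilon_{T(p,p+1)}$ has no jump value in $(\tfrac{2}{p},\tfrac{4}{p})$, hence is affine there. As $s$ is interior to both intervals, $\Upsilon_{T(p,q)}$ is affine in a neighbourhood of $s$, so by the reduction above $s$ is not a jump value. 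The only case the propositions do not cover is $p=2$ (forcing $q=3$), which one dispatches directly: $\Upsilon_{T(2,3)}$ has its single corner at $t=1\neq\tfrac{4}{3}$.

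I do not expect any serious obstacle. The delicate points are just the edge cases $k=1$ and $p=2$, and keeping the interval memberships strict so that "affine on an interval containing $s$" is what the argument actually produces. A more self-contained route would bypass Feller--Krcatovich and show directly that, for $t=\tfrac{4}{q}$, a unique white dot of the staircase of $T(p,q)$ --- the second one, at relative coordinates $(1,\,1-p)$ --- realises the minimal $\mathcal{F}^{t}$-filtration level; that is also true, but it requires bounding the gap lengths of the numerical semigroup generated by $p$ and $q$ against the lengths of the adjacent runs, which is fiddlier, so I would keep it only as a fallback.
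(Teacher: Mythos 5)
Your proof is correct, but it takes a genuinely different route from the paper's. The paper argues case by case on $k=q-p$: for $k\neq p-1$ it quotes the staircase analyses of $T(p,p+k)$ (Propositions \ref{prop1}, \ref{prop3}, \ref{prop4}) to list every possible jump value of $T(p,q)$ in $(0,\frac{4}{p})$ and checks that $\frac{4}{q}$ is not among them, and for $k=p-1$ it computes the staircase of $T(p,2p-1)$ explicitly. You instead work at the level of $\Upsilon$ itself, using Theorem \ref{thmeuclid} to reduce to the summands $T(k,p)$ and $T(p,p+1)$, so you need only Propositions \ref{prop1} and \ref{prop2}, no case split on $k$, and no separate treatment of $q=2p-1$; as a bonus, your appeal to Proposition \ref{prop2} also covers $k=1$, a subcase the paper's citation of Propositions \ref{prop3} and \ref{prop4} technically misses. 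The price is that you use, in both directions, the equivalence (for staircase complexes) between ``$t$ is a jump value'' and ``$\Upsilon'$ is singular at $t$'': you need ``corner $\Rightarrow$ jump value'' for the summands and ``no corner $\Rightarrow$ no jump value'' for $T(p,q)$, whereas the paper's stated property of $\Upsilon$ (item (2) of the theorem listing its properties) and Theorem \ref{uuthm}(2) only give one direction each. The equivalence is true, but for a self-contained write-up you should record the short argument: distinct white dots of a staircase determine distinct linear functions $t\mapsto \alg+\frac{t}{2}(\alex-\alg)$, so $\gamma$, being their minimum, has a corner exactly where the minimizing white dot changes, i.e.\ exactly where $p_t^+\neq p_t^-$; and for a staircase $p_t^+\neq p_t^-$ holds if and only if $\mathcal{Z}^+$ and $\mathcal{Z}^-$ are disjoint, since each white dot is itself a cycle generating $H_0$ and every cycle in $\mathcal{Z}^\pm$ must contain a vertex at $p_t^\pm$. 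Your edge cases $k=1$ and $p=2$ are handled correctly, and the strict inequalities $\frac{2}{p}<\frac{4}{q}<\min\{\frac{4}{p},\frac{2}{k}\}$ do follow from $p<q<2p$ as you claim.
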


\begin{proof}
Let $k = q-p$. Then $1 \le k \le p-1$. As long as $k \neq p-1$, by the Proposition \ref{prop1}, \ref{prop3} and \ref{prop4}, the only possible jump values in $(0, \frac{4}{p})$ are $\frac{2}{p}$ and $\frac{2}{k}$. Thus $\frac{4}{q}$ must be a non-jump-value.

If $k = p-1$, we have 
\[S = \{0, p, 2p-1, 2p, 3p-1, 3p, 4p-2, 4p-1, 4p, 5p-2, \dots\}\]

So the steps are
\[[1, p-1, 1, p-2, 2, p-2, 2, p-3, 3, p-3, \dots]\]

It's clear that for any $i$, the line $L$ joining $P_i$ and $P_{i+1}$ lies below all other white dots, and thus the jump values correspond to slopes of form $\frac{a-p}{a}$ ($1\le a \le p-1$) and $\frac{1+a-p}{a}$ ($1\le a \le p-2$). $s = \frac{4}{q}$ corresponds to a slope of $\frac{2-q}{2}$. Thus $\frac{4}{q}$ must be a non-jump-value.
\end{proof}

\begin{prop}
\label{prop-}
For any knot $K = -T(p,q)$ with $p,q$ coprime positive integers, $\uu$ is trivial. That is, for any $t\in (0,2)$ and $s\in[0,2]$, 
$\uu_{K,t}(s) = \infty$.
\end{prop}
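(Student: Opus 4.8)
The plan is to use that $T(p,q)$ is an L-space knot, so that $\cinf(T(p,q))$ is a staircase complex with generators $v_0, v_1, \dots, v_d$ (with $d$ even), where $v_i$ has Maslov grading $0$ for $i$ even (the ``white'' dots) and Maslov grading $1$ for $i$ odd (the ``black'' dots), and $\partial v_{2j+1} = v_{2j} + v_{2j+2}$. By Theorem \ref{thm cinf}(2) and the remark following it, $\cinf(-T(p,q)) \simeq \cinf(T(p,q))^*$ is the dual staircase: setting $v_{-1}^* = v_{d+1}^* = 0$, it has generators $v_0^*, \dots, v_d^*$ and differential $\partial^* v_{2j}^* = v_{2j-1}^* + v_{2j+1}^*$, $\partial^* v_{2j+1}^* = 0$. (As the complex of an L-space knot, $\cinf(T(p,q))$ has no further summands; in any case, acyclic summands dualize to acyclic summands and contribute nothing to the homology.)

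The first real step is to pin down the homology of $\cinf(-T(p,q))$ in the Maslov grading of the $S^3$-generator. The homogeneous chains in that grading are exactly the $\f$-linear combinations of (a fixed $U$-power of) the duals $v_{2j}^*$ of the white dots: if instead the $v_{2j+1}^*$ occupied that grading, then, since $\partial^* v_{2j+1}^* = 0$ while $v_{2j+1}^* = \partial^*(v_0^* + v_2^* + \dots + v_{2j}^*)$, every cycle in that grading would be a boundary and the homology there would vanish, contradicting $H_*(\cinf(-T(p,q))) \cong \f[U,U^{-1}]$. A one-line computation with $\partial^*$ then shows that $\sum_j c_j v_{2j}^*$ is a cycle precisely when all $c_j$ agree, so the only nonzero cycle in this grading is $z_0 \coloneqq \sum_j v_{2j}^*$; and $z_0$ is not a boundary, since the Maslov grading just above that of $z_0$ is spanned by $U$-translates of the $v_{2j+1}^*$, which $\partial^*$ annihilates. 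Hence $[z_0]$ generates this grading-$0$ homology, and $z_0$ is the \emph{unique} cycle representing that generator.

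With this in hand the proposition is immediate. Fix $t \in (0,2)$ and a sufficiently small $\delta > 0$, and set $t^\pm = t \pm \delta$. By the definition of $\gamma_K$, the subcomplex $C^{t^\pm}_{\gamma_K(t^\pm)}$ contains a cycle representing the generator of $H_0(\cinf(-T(p,q)))$; being a grading-$0$ cycle, that cycle must be $z_0$. Therefore $\mathcal{Z}^+ = \mathcal{Z}^- = \{z_0\}$, so these sets are not disjoint, every $t$ is a no-jump-value, $\gamma^2_{K,t}(s) = -\infty$, and $\uu_{K,t}(s) = -2\gamma^2_{K,t}(s) - \Upsilon_K(t) = \infty$ for all $s \in [0,2]$.

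The only step I expect to require care is the middle one: correctly tracking Maslov gradings in the dual complex so as to be sure that the relevant grading-$0$ slice is spanned by the white-dot duals (so that its unique nonzero cycle $z_0$ is homologically essential) rather than by the black-dot duals. Once that is settled the argument is purely formal; as a consistency check, the resulting $\gamma_K$ can be compared against the identity $\Upsilon_{-T(p,q)} = -\Upsilon_{T(p,q)}$.
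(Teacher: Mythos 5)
Your proof is correct and follows essentially the same route as the paper: dualize the staircase, observe that the unique grading-$0$ cycle representing the generator of $H_0(\cinf(-T(p,q)))$ is the sum of all the white-dot duals, and conclude that $\mathcal{Z}^+ = \mathcal{Z}^-$ for every $t$, forcing $\gamma^2_{K,t}(s) = -\infty$. The paper states the uniqueness of this cycle without the grading bookkeeping you supply, so your write-up is just a more detailed version of the same argument.
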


\begin{proof}
Since $\cinf(T(p,q))$ is a staircase complex, its dual complex $\cinf(K)$ can be represented by a "reflected staircase": start with a lattice point which represents a grading-0 generator, go down to a grading-(-1) generator, go right to a grading-0 generator, and down and right and so forth. There is a unique cycle that represents $1\in H_0(\cinf(K))$ and it is the sum of all grading-0 points in this reflected staircase. Thus in the definition of $\uu$ we have $\mathcal{Z}^+ = \mathcal{Z}^-$ and consequently $\uu$ is always $\infty$.
\end{proof}

To establish independence we need the following lemma that comes directly from the subadditivity property of $\uu$.

\begin{lem}
\label{lem1}
For knots $K$ and $J$, if for some $s\in (0,2)$ and $m\in \mathbb{R}$ we have $\min\{\uu_J(s), \uu_{-J}(s)\} > m$,
then $\uu_K(s) > m \Leftrightarrow \uu_{K\#J}(s) > m $, and 
$\uu_K(s) = m \Leftrightarrow \uu_{K\#J}(s) = m $.
\end{lem}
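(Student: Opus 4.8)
The plan is to deduce the lemma from the subadditivity property, Theorem \ref{uuthm}(3), applied twice. First I would apply subadditivity to the pair $(K, J)$ to get
\[
\uu_{K\#J}(s) \ge \min\{\uu_K(s), \uu_J(s)\}.
\]
Since $\uu_J(s) > m$ by hypothesis, if $\uu_K(s) > m$ then the right-hand side exceeds $m$, giving $\uu_{K\#J}(s) > m$. This handles the forward direction of the first biconditional. For the reverse direction I would apply subadditivity to the pair $(K\#J, -J)$, using that $(K\#J)\#(-J)$ is concordant to $K$ (connected sum with $J\#(-J)$, which is slice) together with the concordance invariance Theorem \ref{uuthm}(1). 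This yields
\[
\uu_K(s) \ge \min\{\uu_{K\#J}(s), \uu_{-J}(s)\}.
\]
Now if $\uu_{K\#J}(s) > m$, then since $\uu_{-J}(s) > m$ as well, we get $\uu_K(s) > m$. Combining the two directions establishes $\uu_K(s) > m \Leftrightarrow \uu_{K\#J}(s) > m$.

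For the second biconditional, I would first observe that the equality statements only make sense when $\uu_K(s)$ (resp.\ $\uu_{K\#J}(s)$) is finite, so implicitly we are in the range of $s$ where these are honest real numbers rather than $\infty$. Given the first biconditional, the statement $\uu_K(s) = m$ is equivalent to "$\uu_K(s) \ge m$ and not $\uu_K(s) > m$", so it suffices to upgrade the argument above to handle non-strict inequalities. Indeed, subadditivity gives $\uu_{K\#J}(s) \ge \min\{\uu_K(s), \uu_J(s)\}$; if $\uu_K(s) = m$ then since $\uu_J(s) > m \ge \uu_K(s)$ the minimum equals $m$, so $\uu_{K\#J}(s) \ge m$. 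Symmetrically, $\uu_{K\#J}(s) = m$ forces $\uu_K(s) \ge m$ via the $(K\#J, -J)$ application. So from $\uu_K(s) = m$ we get both $\uu_{K\#J}(s) \ge m$ and (by the first biconditional applied contrapositively, since $\uu_K(s)$ is not $> m$) $\uu_{K\#J}(s)$ is not $> m$; hence $\uu_{K\#J}(s) = m$. The converse is identical with the roles of $K$ and $K\#J$ swapped.

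I expect the only real subtlety to be bookkeeping around the value $\infty$: subadditivity is stated for the diagonal invariant $\uu_{K,t}(t) = \uu_K(t)$ and allows the value $\infty$, so the chain of inequalities above remains valid in the extended reals, but one should note that $\min\{\uu_J(s), \uu_{-J}(s)\} > m$ with $m$ real already forces both to be genuinely larger than $m$ (possibly $\infty$), which is exactly what makes the minimum computations go through. No calculation is needed beyond invoking Theorem \ref{uuthm}(1) and (3) and the fact that $J \# (-J)$ is slice; the lemma is purely formal once those are in hand.
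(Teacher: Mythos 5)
Your proof is correct and follows essentially the same route as the paper: both directions come from applying subadditivity (Theorem \ref{uuthm}(3)) to the pairs $(K,J)$ and $(K\#J,-J)$ together with concordance invariance of $\uu$ and the fact that $K\#J\#(-J)$ is concordant to $K$, and the equality case is settled by combining the resulting $\ge m$ bound with the strict-inequality biconditional. The paper's write-up is just a compressed version of your argument, phrasing the reverse implications as a symmetry/contradiction step.
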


\begin{proof}
Since $K$ is concordant to $K\#J\#(-J)$ it suffices to prove one direction. If $\uu_K(s) > m$ then $\uu_{K\#J}(s) \ge \min\{\uu_{K}(s), \uu_{J}(s)\} > m$.

Similarly, if $\uu_K(s) = m$ then $\uu_{K\#J}(s) \ge \min\{\uu_{K}(s), \uu_{J}(s)\} = m$. But if $\uu_{K\#J}(s)>m$ then 
\[\uu_{K}(s) = \uu_{K\#J\#(-J)}(s) \ge \min\{\uu_{K\#J}(s), \uu_{-J}(s)\} > m\]
a contradiction! Thus $\uu_{K\#J}(s)=m$
\end{proof}

\begin{lem}
\label{lem2}
If knot $K$ has $\uu_K(s) < \uu_{-K}(s)$, then for any positive integer $n$, $\uu_{nK}(s) = \uu_K(s)$.
\end{lem}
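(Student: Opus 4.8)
The plan is to induct on $n$, using Lemma \ref{lem1} at each step but being careful about which way the strict inequality points. The base case $n=1$ is trivial. For the inductive step, suppose $\uu_{nK}(s) = \uu_K(s)$; I want to deduce $\uu_{(n+1)K}(s) = \uu_K(s)$. Write $(n+1)K = (nK)\#K$ and apply the subadditivity property (Theorem \ref{uuthm}(3)): $\uu_{(n+1)K}(s) \ge \min\{\uu_{nK}(s), \uu_K(s)\} = \uu_K(s)$ by the inductive hypothesis. So it remains to prove the reverse inequality $\uu_{(n+1)K}(s) \le \uu_K(s)$.

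For the reverse inequality I would argue by contradiction: suppose $\uu_{(n+1)K}(s) > \uu_K(s)$. Then consider $nK$ as $\big((n+1)K\big)\#(-K)$ (valid since $nK$ is concordant to $(n+1)K\#(-K)$ and $\uu$ is a concordance invariant by Theorem \ref{uuthm}(1)). Subadditivity gives
\[
\uu_{nK}(s) \;\ge\; \min\{\uu_{(n+1)K}(s),\, \uu_{-K}(s)\}.
\]
By assumption $\uu_K(s) < \uu_{-K}(s)$, and we have supposed $\uu_{(n+1)K}(s) > \uu_K(s)$; hence both quantities on the right exceed $\uu_K(s)$, so $\uu_{nK}(s) > \uu_K(s)$, contradicting the inductive hypothesis $\uu_{nK}(s) = \uu_K(s)$. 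Therefore $\uu_{(n+1)K}(s) = \uu_K(s)$, completing the induction. (In fact this last step is essentially the second half of the proof of Lemma \ref{lem1} with $K \rightsquigarrow (n+1)K$ and $J \rightsquigarrow -K$, with the hypothesis $\min\{\uu_{-K}(s),\uu_{K}(s)\} > \uu_K(s)$ failing — so I cannot literally cite Lemma \ref{lem1} and must redo the short argument, which is the one subtlety to watch.)

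I expect no serious obstacle here; the only thing requiring care is that Lemma \ref{lem1} as stated assumes $\min\{\uu_J(s),\uu_{-J}(s)\} > m$, whereas here with $J = -K$ and $m = \uu_K(s)$ we only have $\uu_{-K}(s) > m$ but $\uu_{-(-K)}(s) = \uu_K(s) = m$, not $> m$. So the clean way is to avoid invoking Lemma \ref{lem1} as a black box and instead run the two-line subadditivity-plus-concordance argument directly, as sketched above, which goes through using only the strict inequality $\uu_K(s) < \uu_{-K}(s)$ in the one place it is needed.
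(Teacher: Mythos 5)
Your proof is correct and follows essentially the same route as the paper's: both establish $\uu_{nK}(s) \ge \uu_K(s)$ by subadditivity and then rule out strict inequality by applying subadditivity to the concordance $nK \sim (n+1)K\#(-K)$, using the hypothesis $\uu_K(s) < \uu_{-K}(s)$ exactly where you do. Your remark that Lemma \ref{lem1} cannot be cited as a black box here (since its hypothesis fails for $J=-K$, $m=\uu_K(s)$) is a correct and worthwhile observation.
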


\begin{proof}
By inductively applying subadditivity, $\uu_{nK}(s) \ge \uu_K(s)$. But if $\uu_{nK}(s) > \uu_K(s)$, then $\uu_{(n-1)K}(s) \ge \min\{\uu_{nk}(s), \uu_{-K}(s)\} > \uu_K(s)$, and by induction we have $\uu_K(s) > \uu_K(s)$, a contradiction! So we must have $\uu_{nK}(s) = \uu_K(s)$

\end{proof}

We are now ready to prove our main theorems.

\begin{proof}[Proof of Theorem \ref{thm2}]

There are two cases:

1) If $2\le k < \frac{p}{2}$, then by Proposition \ref{prop1}, \ref{prop2} and \ref{prop3}

\begin{align*}
\uu_{T(k,p)}\Big(\frac{4}{p}\Big) &= \infty \\
\uu_{T(p,p+1)}\Big(\frac{4}{p}\Big) &= -\frac{4(p-2)}{p} \\
\uu_{T(p,p+k)}\Big(\frac{4}{p}\Big) &= -\frac{4(p-k-1)}{p}
\end{align*}

So by Lemma \ref{lem1} we have 
\[\uu_{T(k,p)\#T(p,p+1)}\Big(\frac{4}{p}\Big) = -\frac{4(p-2)}{p} \neq \uu_{T(p,p+k)}\Big(\frac{4}{p}\Big)\]

Thus $\cinf(T(p, p+k))$ is not stably equivalent to $\cinf(T(k,p)\#T(p,p+1))$.

2) If $\frac{p}{2}< k \le p-2$, then by Proposition \ref{prop2}, \ref{prop4} and \ref{prop5},

\begin{align*}
\uu_{T(p,p+1)}\Big(\frac{4}{p}\Big) &= -\frac{4(p-2)}{p} \\
\uu_{T(p,p+k)}\Big(\frac{4}{p}\Big) &= -\frac{4(k-1)}{p} \\
\uu_{T(k,p)}\Big(\frac{4}{p}\Big) &= \infty
\end{align*}

So by Lemma \ref{lem1} we have 
\[\uu_{T(k,p)\#T(p,p+1)}\Big(\frac{4}{p}\Big) = -\frac{4(p-2)}{p} \neq \uu_{T(p,p+k)}\Big(\frac{4}{p}\Big)\]

Thus $\cinf(T(p, p+k))$ is not stably equivalent to $\cinf(T(k,p)\#T(p,p+1))$.

\end{proof}

\begin{rmk}
As noted in Allen's paper \cite{Allen17}, in \cite{kim_krcatovich_park_2017} Kim, Krcatovich and Park gave a condition for the knot complex of the connected sum of two L-space knots to be stably equivalent to a staircase complex. In particular, Lemma 3.18 in \cite{kim_krcatovich_park_2017} implies that $\cinf(T(p, 2p-1))$ is stably equivalent to $\cinf(T(p-1,p)\#T(p,p+1))$. So the $2 \le k \le p-2$ condition is in some sense optimal.
\end{rmk}

\begin{proof}[Proof of Theorem \ref{thm1}]
We defined $K_p = T(p, p+1) \# T(2,p) \# -T(p,p+2)$. A direct application of Proposition \ref{prop1}, \ref{prop2}, \ref{prop3} and Lemma \ref{lem1} shows that
$\uu_{K_n,s}(s) = -\frac{4(p-2)}{p}$ for $s = \frac{4}{p}$.

We claim that $\{K_p\}$ for all odd $p\ge 5$ are linearly independent in the smooth knot concordance group:

Suppose otherwise, then there is $K = \sum_{i\in I}c_iK_{i}$ where $I = \{5, 7, \dots, P\}$, such that $K$ is concordant to unknot and $c_P \neq 0$. WLOG let $c_P > 0$. Then $K$ can be rewritten as
\[K = c_PT(P, P+1) + \sum_jT_j\]
where each $T_j$ is either a positive or negative torus knot, and most importantly, By Proposition \ref{prop1}, \ref{prop2}, \ref{prop3} and \ref{prop-},

\[\uu_{\pm T_j}\Big(\frac{4}{P} \Big) = 
\left\{
    \begin{array}{ll}
	-\frac{4(P-3)}{P} & \mbox{if } \pm T_j = T(P,P+2)\\
    \infty  & \mbox{otherwise }
	\end{array}
\right.
\]
On the other hand,

\begin{align*}
\uu_{T(P,P+1)}\Big(\frac{4}{P}\Big) &= -\frac{4(P-2)}{P} \\
\uu_{-T(P,P+1)}\Big(\frac{4}{P}\Big) &= \infty
\end{align*}

So we can apply Lemma \ref{lem1} and \ref{lem2} to conclude that
\[-\frac{4(P-2)}{P} = \uu_{T(P,P+1)}\Big(\frac{4}{P}\Big) = \uu_{c_PT(P,P+1)}\Big(\frac{4}{P}\Big) = \uu_K\Big(\frac{4}{P}\Big)\]
which is a finite value, contradicting the fact that $K$ is concordant to the unknot!

Thus the knots $K_p$ are linearly independent, and by Theorem \ref{thmeuclid} the $\Upsilon$ invariant for each $K_p$ vanishes. This concludes our proof.
\end{proof}

\bibliographystyle{plain}
\bibliography{reference}


\end{document}